\documentclass[a4paper,11pt]{amsart}
\usepackage{amsmath,amsthm,amssymb}
\usepackage{amsfonts, amscd}
\usepackage{hyperref}

\theoremstyle{plain}
\newtheorem{theorem}{Theorem}[section]
\newtheorem{proposition}[theorem]{Proposition}
\newtheorem{lemma}[theorem]{Lemma}

\newtheorem{corollary}[theorem]{Corollary}

\theoremstyle{plain} \numberwithin{equation}{section}
\theoremstyle{definition}

\newtheorem{remark}[theorem]{Remark}

\newtheorem{example}[theorem]{Example}

\newcommand{\Z}{\mathbb{Z}}
\newcommand{\N}{\mathbb{N}}
\newcommand{\Q}{\mathbb{Q}}

\newcommand{\C}{\mathbb{C}}
\newcommand{\CP}{\mathbb{C}P}

\newcommand{\R}{\mathbb{R}}
\newcommand{\uC}{\underline{\mathbb{C}}}

\newcommand{\cH}{\mathcal{H}}

\DeclareMathOperator{\Aut}{Aut}

\begin{document}%
\title[$\Q$-trivial Bott manifolds]{Classification of $\Q$-trivial Bott manifolds}

\author[S. Choi]{Suyoung Choi}
\address{Department of Mathematics, Osaka City University, Sugimoto, Sumiyoshi-ku, Osaka 558-8585, Japan}
\email{choi@sci.osaka-cu.ac.jp}

\urladdr{http://math01.sci.osaka-cu.ac.jp/~choi}
\author[M. Masuda]{Mikiya Masuda}
\address{Department of Mathematics, Osaka City University, Sugimoto, Sumiyoshi-ku, Osaka 558-8585, Japan}
\email{masuda@sci.osaka-cu.ac.jp}
\thanks{The first author was supported by the Japanese Society for the Promotion of Sciences (JSPS grant no. P09023)
and the second author was partially supported by Grant-in-Aid for Scientific Research 19204007.}

\keywords{Bott manifold, Bott tower, cohomological rigidity, strong cohomological rigidity, toric manifold, $\Q$-trivial Bott manifold}
\subjclass[2000]{57R19, 57R20, 57S25, 14M25}

\begin{abstract}
A Bott manifold is a closed smooth manifold obtained as the total space of an iterated $\C P^1$-bundle starting with a point, where each $\C P^1$-bundle is the projectivization of a Whitney sum of two complex line bundles.  A \emph{$\Q$-trivial Bott manifold} of dimension $2n$ is a Bott manifold whose cohomology ring is isomorphic to that of $(\CP^1)^n$ with $\Q$-coefficients. We find all diffeomorphism types of $\Q$-trivial Bott manifolds and show that they are distinguished by their cohomology rings with $\Z$-coefficients. As a consequence, we see that the number of diffeomorphism classes in $\Q$-trivial Bott manifolds of dimension $2n$ is equal to the number of partitions of $n$.  We even show that any cohomology ring isomorphism between two $\Q$-trivial Bott manifolds is induced by a diffeomorphism.
\end{abstract}

\date{\today}
\maketitle

\section{Introduction}
A Bott tower of height $n$ is a sequence of $\CP^1$-bundles
\begin{equation} \label{eqn:Bott tower}
B_n\stackrel{\pi_n}\longrightarrow B_{n-1} \stackrel{\pi_{n-1}}\longrightarrow
\dots \stackrel{\pi_2}\longrightarrow B_1 \stackrel{\pi_1}\longrightarrow
B_0=\{\text{a point}\},
\end{equation}
where each $\pi_i\colon B_i\to B_{i-1}$ for $i=1,\dots,n$ is the projectivization of a Whitney sum of two complex line bundles over $B_{i-1}$. We call $B_i$ an \emph{$i$-stage Bott manifold} and are concerned with the diffeomorphism type of the $n$-stage Bott manifold $B_n$.  Note that even if two Bott towers of height $n$ are different, their $n$-stage Bott manifolds can be diffeomorphic.

If the fiber bundles in \eqref{eqn:Bott tower} are all trivial, then $B_n$ is diffeomorphic to $(\CP^1)^n$.  It is shown in \cite{Ma-Pa-2008} that if the cohomology ring of $B_n$ is isomorphic to that of $(\CP^1)^n$ with $\Z$-coefficients as graded rings, then $B_n$ is diffeomorphic to $(\C P^1)^n$ and moreover the fiber bundles in \eqref{eqn:Bott tower} are all trivial.

We say that $B_n$ is \emph{$\Q$-trivial} if its cohomology ring is isomorphic to that of $(\CP^1)^n$ with $\Q$-coefficients as graded rings.
In this paper, we shall find all diffeomorphism types of $\Q$-trivial Bott manifolds and show that they are diffeomorphic if and only if their cohomology rings with $\Z$-coefficients are isomorphic as graded rings (Theorem~\ref{theorem:Q-trivial Bott manifold}). As a consequence, we see that the number of diffeomorphism classes in $\Q$-trivial Bott manifolds of dimension $2n$ is equal to the number of partitions of $n$.  We also prove that any automorphism of the cohomology ring of a $\Q$-trivial Bott manifold is induced by a diffeomorphism.  This implies that any cohomology ring isomorphism between two $\Q$-trivial Bott manifolds is induced by a diffeomorphism since we already establish that the diffeomorphism types of $\Q$-trivial Bott manifolds are distinguished by their cohomology rings.

Our study is motivated by the so-called \emph{cohomological rigidity problem} for toric manifolds. A toric manifold is a non-singular compact complex algebraic variety with an algebraic torus action having a dense orbit. The cohomological rigidity problem for toric manifolds asks whether the topological types of toric manifolds are distinguished by their cohomology rings or not (see \cite{Ma-Su-2008}). This problem is open, but we have some affirmative partial solutions to the problem for (generalized) Bott manifolds in \cite{Ma-Pa-2008}, \cite{Ch-Ma-Su-2010}, \cite{ch-su-pre} and \cite{Ch-Par-Su-pre}. The result of this paper provides another affirmative evidence to the problem for Bott manifolds.  One can consider the real analogue of Bott towers and Bott manifolds, but the cohomological rigidity for \emph{real} Bott manifolds is established with $\Z/2$-coefficients, see \cite{Ka-Ma-2009} and \cite{Masuda-2008}.

This paper is organized as follows. In Section \ref{section:Bott manifolds}, we review Bott manifolds and prepare several lemmas to prove our main theorems. We find all diffeomorphism types of $\Q$-trivial Bott manifolds in Section \ref{section : Q-trivial Bott manifold} and prove the cohomological rigidity for $\Q$-trivial Bott manifolds in Section \ref{section: cohomological rigidity of Q-trivial}.  Section~\ref{sect:auto} is devoted to proving that any automorphism of the cohomology ring of a $\Q$-trivial Bott manifold is induced by a diffeomorphism.

Throughout this paper, cohomology is taken with $\Z$-coefficient unless otherwise stated.

\section{Cohomology of Bott manifolds} \label{section:Bott manifolds}

We begin with recalling some general facts on projective bundles.  Let $\pi\colon E\to B$ be a complex vector bundle over a smooth manifold $B$ and let $P(E)$ be the projectivization of $E$.

\begin{lemma}\cite[Lemma 2.1]{Ch-Ma-Su-2010} \label{lemm:line}
Let $B$ and $E$ be as above and let $L$ be a complex line bundle over $B$. We denote by $E^*$ the complex vector bundle dual to $E$. Then both $P(E^*)$ and $P(E\otimes L)$ are isomorphic to $P(E)$ as fiber bundles over $B$, in particular, they are diffeomorphic.
\end{lemma}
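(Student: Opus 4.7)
The plan is to construct each bundle isomorphism fiberwise by an explicit formula and then verify that the fiberwise maps assemble into a smooth bundle isomorphism over $B$. In both cases the crucial observation is that at the level of projectivizations, certain auxiliary choices (a local trivializing section, a Hermitian metric) disappear, so the resulting map is canonical even though the construction is not.

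For $P(E\otimes L)\cong P(E)$, the idea is that tensoring a complex vector space by a one-dimensional space is invisible on projective space. Fiberwise, pick any nonzero $\ell\in L_b$ and set $\phi_b([v])=[v\otimes\ell]$; replacing $v$ by $\lambda v$ or $\ell$ by $\mu \ell$ only rescales $v\otimes\ell$ by $\lambda\mu$, so $\phi_b$ is well-defined and independent of the choice of $\ell$. To check smoothness, work over an open set $U\subset B$ trivializing $L$: a nowhere-zero section $s$ of $L|_U$ induces a complex-linear bundle isomorphism $E|_U\to (E\otimes L)|_U$, $v\mapsto v\otimes s$, whose projectivization is the restriction of $\phi$ over $U$. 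Two choices $s$ and $fs$ produce the same map on projectivizations, so the local maps patch to a global smooth fiber-bundle isomorphism.

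For $P(E^*)\cong P(E)$, the difficulty is that $E$ and $E^*$ are not in general isomorphic as complex bundles. We therefore pick a Hermitian metric $\langle\cdot,\cdot\rangle$ on $E$, which exists since $B$ is paracompact, and consider the smooth bundle map $\psi\colon E\to E^*$ given fiberwise by $\psi_b(v)=\langle\cdot,v\rangle$. Although $\psi_b$ is conjugate-linear, it still carries each complex line to a complex line, because the image of $\C v$ is $\{\bar\lambda\,\psi_b(v):\lambda\in\C\}=\C\,\psi_b(v)$. Hence $\psi$ induces a well-defined fiberwise map $P(\psi)\colon P(E)\to P(E^*)$, which is bijective on each fiber (since $\psi_b$ is a real-linear isomorphism) and smooth (since $\psi$ is smooth as a map of underlying real bundles), giving the desired fiber-bundle isomorphism over $B$.

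I do not expect a serious obstacle: both constructions are formal, and the only subtle point is the realization that a conjugate-linear isomorphism on fibers is still sufficient to induce an isomorphism of projective bundles, since the notion of a complex line is preserved. Once this is noticed, the remainder is checking that the auxiliary data — the section $s$ in the first part, and the Hermitian metric in the second — drop out after passing to projectivizations.
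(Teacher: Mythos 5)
Your proof is correct and follows essentially the same route as the paper: a Hermitian metric gives a conjugate-linear bundle map $E\to E^*$ that still sends complex lines to complex lines, and tensoring with a (locally chosen) nonzero vector of $L$ gives a map $E\to E\otimes L$ whose projectivization is independent of the choice. The extra detail you supply on smoothness and patching is fine but does not change the argument.
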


\begin{proof}
We shall reproduce the proof given in \cite{Ch-Ma-Su-2010} for the reader's convenience sake.

Choose a Hermitian metric $\langle\ ,\ \rangle$ on $E$, which is anti-$\C$-linear on the first entry and $\C$-linear on the second entry, and define a map $\tilde b\colon E\to E^*$ by $\tilde b(u):=\langle u,\ \rangle$.  This map is not $\C$-linear but anti-$\C$-linear, so it induces a map $b\colon P(E)\to P(E^*)$, which gives an isomorphism as fiber bundles.

For each $x\in B$, we choose a non-zero vector $v_x$ from the fiber of $L$ over $x$ and define a map $\tilde c\colon E\to E\otimes L$ by $\tilde c(u_x):=u_x\otimes v_x$ where $u_x$ is an element of the fiber of $E$ over $x$. The map $\tilde c$ depends on the choice of $v_x$'s but the induced map $c\colon P(E)\to P(E\otimes L)$ does not because $L$ is a line bundle. It is easy to check that $c$ gives an isomorphism of $P(E)$ and $P(E\otimes L)$ as fiber bundles over $B$.
\end{proof}

\begin{remark} \label{rema:line}
The bundle map $b\colon P(E)\to P(E^*)$ does not preserve the canonical complex structures on the fibers and the pullback of the tautological line bundle over $P(E^*)$ by $b$ is complex conjugate to the tautological line bundle over $P(E)$ since $\tilde b$ is anti-$\C$-linear.  On the other hand, the bundle map $c\colon P(E)\to P(E\otimes L)$ above preserves the canonical complex structure on the fibers and pulls back the tautological line bundle over $P(E\otimes L)$ to that over $P(E)$.
\end{remark}

If $H^{odd}(B)=0$ (and this is the case for Bott manifolds), then $H^*(P(E))$ is a free module over $H^*(B)$ via $\pi^*\colon H^*(B)\to H^*(P(E))$ and the Borel-Hirzebruch formula \cite[(2) on p.515]{bo-hi58} tells us that
\begin{equation} \label{eqn:BH}
    H^\ast(P(E)) = H^\ast(B)[x]/\big(\sum_{i=0}^m (-1)^ic_{i}(E)x^{m-i}\big),
\end{equation}
where $m$ is the fiber dimension of $E$, $c_i(E)$ denotes the $i$-th Chern class of $E$, and $x$ denotes the first Chern class of the tautological line bundle over $P(E)$.  Moreover, the tangent bundle $T_fP(E)$ along the fibers of $P(E)\to B$ admits a canonical complex structure since each fiber is a complex projective space, and with this complex structure its total Chern class is given by
\begin{equation} \label{eqn:tf}
  c(T_fP(E))=\sum_{i=0}^m(1-x)^{m-i}c_i(E).
\end{equation}

Now we consider the Bott tower \eqref{eqn:Bott tower}.  Each fiber bundle $\pi_j\colon B_j\to B_{j-1}$ for $j=1,\dots,n$ is the projectivization of a Whitney sum of two complex line bundles by definition and we may assume that one of the two line bundles is trivial by Lemma~\ref{lemm:line}. Therefore, one can express
\[
\text{$B_j=P(\uC\oplus\gamma^{\alpha_j})$ with $\alpha_j\in H^2(B_{j-1})$,}
\]
where $\uC$ denotes the trivial complex line bundle and $\gamma^{\alpha_j}$ denotes the complex line bundle over $B_{j-1}$ with $\alpha_j$ as the first Chern class.  Note that $\alpha_1=0$ since $B_0$ is a point.  Let $x_j$ be the first Chern class of the tautological line bundle over $B_j$.  Then it follows from \eqref{eqn:BH} that
$$
    H^\ast(B_j) = H^\ast(B_{j-1})[x_j]/ \big(x_j^2 = \alpha_j x_j\big).
$$
Using this formula inductively on $j$ and regarding $H^*(B_j)$ as a graded subring of $H^*(B_n)$ through the projections in \eqref{eqn:Bott tower}, we see that
\begin{equation} \label{eqn:HBn}
    H^*(B_n)=\Z[x_1, \ldots, x_n]/\big(x_j^2 = \alpha_jx_j\mid j=1,\dots,n\big).
\end{equation}

Sometimes it is convenient and helpful to express
\[
\alpha_j=\sum_{i=1}^{j-1} A^i_j x_i \quad\text{with $A^i_j\in \Z$}
\]
and form an upper triangular matrix of size $n$ with zero diagonals:
$$
    A=\left(
      \begin{array}{ccccc}
        0 & A^1_2 & A^1_3 &\cdots & A^1_n \\
         & 0 & A^2_3 & \cdots & A^2_n \\
         &  & \ddots & \ddots& \vdots \\
         &   & &0 &A^{n-1}_n\\
         &   & & &0
      \end{array}
    \right).
$$

Let $S^1$ and $S^3$ denote the unit sphere of $\C$ and $\C^2$ respectively.  Using the matrix $A$, one can describe $B_n$ as the quotient of $(S^3)^n$ by a free action of $(S^1)^n$ defined by
\begin{equation} \label{eqn:quotient}
\begin{split}
&(t_1,\dots,t_n)\cdot \big((z_1,w_1),\dots,(z_j,w_j),\dots,(z_n,w_n)\big)\\
=&\big((t_1z_1,t_1w_1),\dots,(t_jz_j,(\prod_{i=1}^{j-1}t_i^{-A^i_j})t_jw_j),\dots,(t_nz_n,(\prod_{i=1}^{n-1}t_i^{-A^i_n})t_nw_n)\big)
\end{split}
\end{equation}
where $(t_1,\dots,t_n)\in (S^1)^n$ and $(z_j,w_j)$ denotes the coordinate of the $j$th component of $(S^3)^n$. In fact, the projections $$(S^3)^n\to (S^3)^{n-1}\to\dots \to S^3\to \text{\{a point\}}$$ defined by dropping the last factor at each stage induces the Bott tower \eqref{eqn:Bott tower}.

The next lemma and corollary are tricks to simplify algebraic computations. An ordered pair $(z, \bar{z})$ of elements in $H^2(B_n)$ is said to be \emph{vanishing} if $z\bar{z} = 0$ and \emph{primitive} if both $z$ and $\bar{z}$ are primitive.  Note that $(x_j,x_j-\alpha_j)$ is a primitive vanishing pair for each $j$ since $x_j^2=\alpha_jx_j$.

\begin{lemma}\label{lemma:masuda's tricky}
A primitive vanishing pair $(z, \bar{z})$ is of the form
$$
    (ax_j+ u, \pm (a(x_j - \alpha_j) - u))
$$ for some $j$, where $a$ is a non-zero integer, $u$ is a linear combination of $x_i$'s with $i<j$, and $u(u+a \alpha_j) = 0$.
\end{lemma}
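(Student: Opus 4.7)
The plan is to work in the free $\Z$-basis of $H^{*}(B_n)$ consisting of squarefree monomials in $x_1,\dots,x_n$. Write $z=\sum_{i=1}^n c_i x_i$ and $\bar z=\sum_{i=1}^n d_i x_i$, and let $j$ be the largest index for which either $c_j$ or $d_j$ is nonzero. The pair is symmetric, so I may assume $c_j\neq 0$; set $a:=c_j$ and $u:=\sum_{i<j}c_i x_i$, so $z=ax_j+u$. It remains to pin down $\bar z$ and to read off the condition $u(u+a\alpha_j)=0$.

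The key computation is to expand $z\bar z$ in the squarefree basis of $H^4(B_n)$ using the relations $x_l^2=\alpha_l x_l=\sum_{m<l}A^m_l x_m x_l$. Only the square $x_j^2$ produces new monomials of the form $x_i x_j$ (for $i<j$); squares $x_k^2$ with $k<j$ produce monomials $x_i x_k$ with $i<k<j$ only, and $x_k^2$ with $k>j$ does not enter since $c_k=d_k=0$. Thus the coefficient of $x_i x_j$ in $z\bar z$ is
\[
c_i d_j + c_j d_i + c_j d_j A^i_j,
\]
and $z\bar z=0$ forces this to vanish for every $i<j$.

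Next I argue that $d_j=\pm a$. First, if $d_j=0$ the displayed equation gives $c_j d_i=0$, hence $d_i=0$ for all $i<j$ (since $c_j\neq0$), contradicting the primitivity of $\bar z$. So $d_j\neq 0$, and the equation yields $c_j\mid c_i d_j$ for every $i<j$. Setting $g:=\gcd(c_j,d_j)$, the integer $c_j/g$ is coprime to $d_j/g$, so $c_j/g$ divides every $c_i$ with $i<j$, hence every coefficient of $z$ (those with $i>j$ vanish). Primitivity of $z$ forces $c_j/g=\pm1$, i.e.\ $c_j\mid d_j$. The symmetric argument using primitivity of $\bar z$ gives $d_j\mid c_j$, so $d_j=\epsilon a$ with $\epsilon=\pm1$. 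Substituting back, $d_i=-\epsilon(c_i+aA^i_j)$ for $i<j$, and reassembling using $\alpha_j=\sum_{i<j}A^i_j x_i$ gives
\[
\bar z=\epsilon\bigl(ax_j-a\alpha_j-u\bigr)=\pm\bigl(a(x_j-\alpha_j)-u\bigr),
\]
which is the asserted form.

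Finally, for the condition on $u$, I expand
\[
z\bar z=\pm(ax_j+u)\bigl(a(x_j-\alpha_j)-u\bigr)=\pm\bigl(a^2(x_j^2-\alpha_j x_j)-au\alpha_j-u^2\bigr),
\]
and the relation $x_j^2=\alpha_j x_j$ collapses this to $\mp u(u+a\alpha_j)$; thus $z\bar z=0$ if and only if $u(u+a\alpha_j)=0$. The main technical obstacle is the divisibility step that extracts $d_j=\pm a$ from the single family of equations obtained by reading off the coefficient of $x_i x_j$; the rest is bookkeeping in the squarefree basis.
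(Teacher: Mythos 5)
Your proof is correct and follows essentially the same route as the paper's: isolate the top index $j$, use the vanishing of the $x_ix_j$-coefficients together with primitivity to force the leading coefficients of $z$ and $\bar z$ to agree up to sign, solve for the remaining coefficients, and then reduce $z\bar z=0$ to $u(u+a\alpha_j)=0$. The only differences are cosmetic: you run the divisibility argument coefficient-by-coefficient via a gcd, where the paper argues with the classes $u,v$ directly, and you obtain the last identity by expanding $z\bar z$ rather than from the relation $uv=0$.
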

\begin{proof}
Set $z= ax_j + u$ (resp. $\bar{z} = bx_k + v$), where $a$ (resp. $b$) is a non-zero integer and $u$ (resp. $v$) is a linear combination of $x_i$'s with $i<j$ (resp. $i<k$). If $k \neq j$, then $ab x_j x_k$ term in $z\bar{z}$ survives in $H^\ast(B_n)$ because of \eqref{eqn:HBn}, hence $k=j$. Therefore,
\begin{equation} \label{eqn:zz}
    0=z\bar{z}  = abx_j^2 + (av + bu)x_j + uv  = (ab \alpha_j + av + bu)x_j + uv.
\end{equation}
Since $u$ and $v$ are linear combinations of $x_i$'s with $i<j$, the identity \eqref{eqn:zz} implies that
\begin{equation} \label{eqn:vanish}
\text{$ab \alpha_j + av +bu =0$\quad  and\quad  $uv = 0$.}
\end{equation}
The former identity in \eqref{eqn:vanish} shows that $bu$ is divisible by $a$. However $u$ is not divisible by any nontrivial factor of $a$ since $z=ax_j+u$ is primitive. Hence $a | b$. Similarly, $av$ is divisible by $b$ and hence $b | a$. Therefore, $b = \pm a$ and hence $v = \mp (u +a\alpha_j)$ by the former identity of \eqref{eqn:vanish}.  This proves the first statement in the lemma because $\bar z=bz_j+v$.  The last identity in the lemma follows from the latter identity of \eqref{eqn:vanish} since $v=u+a\alpha_j$ up to sign.
\end{proof}

\begin{corollary}\label{corollary: squared zero elements}
A square zero primitive element in $H^2(B_n)$ is either $x_j - \frac{1}{2}\alpha_j$ or $2x_j - \alpha_j$ up to sign for some $j$, where $\alpha_j^2 = 0$ in both cases. In particular, the number of square zero primitive elements in $H^2(B_n)$ up to sign is equal to the number of $\alpha_j$'s with $\alpha_j^2 = 0$.
\end{corollary}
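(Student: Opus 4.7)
The plan is to specialize the preceding lemma to the case $\bar z=z$.  If $z\in H^2(B_n)$ is primitive with $z^2=0$, then $(z,z)$ is a primitive vanishing pair, so Lemma~\ref{lemma:masuda's tricky} gives
\[
 z = ax_j + u \qquad\text{and}\qquad z = \pm\bigl(a(x_j - \alpha_j) - u\bigr)
\]
for some $j$, some non-zero integer $a$, and some linear combination $u$ of $x_1,\dots,x_{j-1}$ satisfying $u(u+a\alpha_j)=0$.  The minus-sign alternative is vacuous: equating the two expressions gives $2ax_j = a\alpha_j$, which is impossible because $\alpha_j$ lies in the subring generated by $x_1,\dots,x_{j-1}$ while $a\neq 0$.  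The plus-sign alternative yields the key identity
\[
 2u + a\alpha_j = 0.
\]

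From here I would split on the parity of $a$.  When $a$ is odd, $2u=-a\alpha_j$ forces every coefficient of $\alpha_j$ in the basis $\{x_1,\dots,x_{j-1}\}$ to be even, so $\tfrac12\alpha_j$ is an integral class and $z = a(x_j - \tfrac12\alpha_j)$; primitivity then pins $a = \pm 1$.  When $a=2a'$ is even, $u=-a'\alpha_j$ is automatically integral and $z = a'(2x_j - \alpha_j)$, with $a' = \pm 1$ by primitivity.  To verify $\alpha_j^2=0$, I would note that under $2u+a\alpha_j=0$ the constraint $u(u+a\alpha_j)=0$ collapses to $u^2=0$, and substituting $u=-\tfrac{a}{2}\alpha_j$ gives $\tfrac{a^2}{4}\alpha_j^2=0$, whence $\alpha_j^2=0$ since $a\neq 0$.

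For the counting assertion I would argue that the two normal forms are mutually exclusive for a given $j$.  If every coefficient of $\alpha_j$ is even, then $2x_j-\alpha_j=2(x_j-\tfrac12\alpha_j)$ is not primitive, so only $\pm(x_j-\tfrac12\alpha_j)$ survives; if some coefficient of $\alpha_j$ is odd, then $x_j-\tfrac12\alpha_j$ fails to be integral, and only $\pm(2x_j-\alpha_j)$ survives.  Conversely, a direct check using $x_j^2=\alpha_jx_j$ together with $\alpha_j^2=0$ shows that each $j$ with $\alpha_j^2=0$ really does contribute one primitive square-zero class, and classes coming from different indices $j$ are distinguished by their $x_j$-leading term, so the bijection is immediate.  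The main obstacle is not conceptual---Lemma~\ref{lemma:masuda's tricky} has done the structural work---but bookkeeping: one must track the interaction between integrality of $\tfrac12\alpha_j$ and primitivity of $z$ carefully so that the two listed normal forms appear without overlap and the count comes out exactly.
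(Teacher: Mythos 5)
Your proposal is correct and follows essentially the same route as the paper: specialize Lemma~\ref{lemma:masuda's tricky} to $z=\bar z$, discard the minus-sign case since $\alpha_j$ involves only $x_i$ with $i<j$, deduce $2u=-a\alpha_j$, read off the two normal forms from primitivity, and obtain $\alpha_j^2=0$ from $u(u+a\alpha_j)=0$. Your extra bookkeeping (the parity split on $a$ and the mutual exclusivity of the two forms for the counting claim) only makes explicit what the paper leaves implicit.
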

\begin{proof}
Since $z = \bar{z}$ in the proof of Lemma~\ref{lemma:masuda's tricky}, either $2u = -a \alpha_j$ or $2x_j = \alpha_j$. But the latter case does not occur since $\alpha_j$ is a linear combination of $x_i$'s with $i<j$. Hence, $2u = -a\alpha_j$. Thus, it follows from the primitiveness of $z$ that $z$ must be either $x_j-\frac{1}{2}\alpha_j$ or $2x_j - \alpha_j$ up to sign. Since $u(u+a\alpha_j)=0$ and $2u=-a\alpha_j$, we have $\alpha_j^2=0$, proving the corollary.
\end{proof}

\section{$\Q$-trivial Bott manifolds} \label{section : Q-trivial Bott manifold}

The purpose of this section is to classify $\Q$-trivial Bott manifolds. We freely use the notation in Section~\ref{section:Bott manifolds}.

\begin{proposition}\label{proposition:Q-trivial Bott manifold}
$B_n$ is $\Q$-trivial if and only if $\alpha_j^2 = 0$ in $H^*(B_n)$ for all $j=1, \ldots, n$.
In particular, if $B_n$ is $\Q$-trivial, then every Bott manifold $B_j$ in the tower \eqref{eqn:Bott tower} is $\Q$-trivial.
\end{proposition}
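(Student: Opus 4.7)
The plan is to reduce the statement, via Corollary~\ref{corollary: squared zero elements}, to a counting of square-zero elements in $H^2(B_n;\Q)$, and to exploit that $H^*(B_n;\Z)$ is torsion-free so $H^*(B_n;\Z)\hookrightarrow H^*(B_n;\Q)$.

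For the ``if'' direction, I would assume $\alpha_j^2=0$ for every $j$, work over $\Q$, and introduce the elements $y_j:=x_j-\tfrac{1}{2}\alpha_j\in H^2(B_n;\Q)$. A one-line computation using $x_j^2=\alpha_jx_j$ gives $y_j^2=\tfrac{1}{4}\alpha_j^2=0$. Since $\alpha_j$ is a $\Z$-linear combination of $x_1,\dots,x_{j-1}$, the change of generators $x_j\leftrightarrow y_j+\tfrac{1}{2}\alpha_j$ is upper triangular with $1$'s on the diagonal, so $y_1,\dots,y_n$ still generate $H^*(B_n;\Q)$ as a $\Q$-algebra. This produces a surjective ring map $\Q[y_1,\dots,y_n]/(y_j^2:j)\twoheadrightarrow H^*(B_n;\Q)$, which is forced to be an isomorphism by comparing $\Q$-dimensions (both are $2^n$).

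For the ``only if'' direction, I would count one-dimensional subspaces $L\subset H^2(B_n;\Q)$ on which the cup square vanishes. Each such $L$ contains a primitive integral element $\tilde z$, unique up to sign, and torsion-freeness of $H^*(B_n;\Z)$ forces $\tilde z^2=0$ already in $H^*(B_n;\Z)$; so Corollary~\ref{corollary: squared zero elements} identifies the number of such lines with $\#\{j:\alpha_j^2=0\}$. On the other hand, in $\Q[w_1,\dots,w_n]/(w_j^2:j)$ the element $\sum c_jw_j$ has square $\sum_{i<j}2c_ic_jw_iw_j$, which vanishes exactly when at most one $c_j$ is nonzero; hence the model ring has precisely $n$ square-zero lines in degree $2$. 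If $B_n$ is $\Q$-trivial, matching the two counts forces $\alpha_j^2=0$ for every $j$.

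The ``in particular'' clause will follow by restriction: for $k\le j$ we have $\alpha_k\in H^2(B_{k-1})\subset H^2(B_{j-1})$, and the injection $H^*(B_{j-1})\hookrightarrow H^*(B_n)$ pulls the vanishing $\alpha_k^2=0$ back to $H^*(B_j)$, after which the ``if'' direction applied to $B_j$ concludes. The only genuinely delicate step I anticipate is the passage from primitive integral square-zero vectors to square-zero rational lines needed to invoke Corollary~\ref{corollary: squared zero elements} over $\Q$, but torsion-freeness of $H^*(B_n;\Z)$ renders this a routine bookkeeping.
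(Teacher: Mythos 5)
Your proposal is correct and follows essentially the same route as the paper: the ``if'' direction via the square-zero generators $x_j-\tfrac{1}{2}\alpha_j$ of $H^*(B_n;\Q)$, and the ``only if'' direction by counting square-zero classes in degree $2$ and invoking Corollary~\ref{corollary: squared zero elements}. The extra details you supply (the $2^n$-dimension count forcing the surjection to be an isomorphism, the torsion-freeness bookkeeping relating rational square-zero lines to primitive integral square-zero elements, and the restriction argument for the ``in particular'' clause) are exactly the steps the paper leaves implicit.
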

\begin{proof}
If $\alpha_j^2 =0$, then $( x_j - \frac{\alpha_j}{2})^2=0$ in $H^\ast(B_n; \Q)$ because $x_j^2=\alpha_j x_j$.  Since $ x_j - \frac{\alpha_j}{2}$ for $j=1,\dots,n$ generate $H^*(B_n;\Q)$ as a graded ring, this shows that $B_n$ is $\Q$-trivial. Conversely, if $B_n$ is $\Q$-trivial, there are $n$ primitive elements in $H^2(B_n)$ up to sign whose square vanish. By Corollary \ref{corollary: squared zero elements}, the number of $\alpha_j$'s whose square vanish is also $n$, which implies the converse.
\end{proof}

\begin{example} \label{example:Hirzebruch surface}
For $a\in \Z$, let $\Sigma_a = P(\uC \oplus \gamma^{ax_1})$, where $\gamma^{ax_1}$ is the complex line bundle over $\CP^1=B_1$ whose first Chern class is $ax_1\in H^2(\C P^1)$.  $\Sigma_a$ is called a \emph{Hirzebruch Surface}, which was first studied by Hirzebruch in \cite{Hirzebruch-1951}. Note that
$$
    H^\ast(\Sigma_a;\Z) = \Z[x_1, x_2]/ (x_1^2=0,\ x_2^2 =ax_1),
$$
so that $\alpha_1 = 0$ and $\alpha_2 = ax_1$ in this case. Since the squares of $\alpha_1$ and $\alpha_2$ are both $0$, $\Sigma_a$ is $\Q$-trivial. As is well-known, $\Sigma_a$ is diffeomorphic to $\CP^1 \times \CP^1$ if $a$ is even and to $\CP^2 \sharp \overline{\CP^2}$ if $a$ is odd.
\end{example}

Denote $\cH_1 = \CP^1, \cH_2 = \Sigma_1$ and let $\pi_2 : \cH_2 \to \cH_1$ be the canonical projection. We consider the pullback bundle $\pi_3 : \cH_3 \to \cH_2$ of $\pi_2:\cH_2 \to \cH_1$ via $\pi_2$;
\begin{equation} \label{eqn:H32}
\begin{CD}
    \cH_3 @>\rho_3>>  \cH_2 = P(\uC \oplus \gamma^{x_1}) \\
@VV{\pi_3}V  @VV{\pi_2}V\\
 \cH_2 = P(\uC \oplus \gamma^{x_1}) @>{\pi_2}>> \cH_1 = \CP^1
 \end{CD}
\end{equation}
where $\rho_3$ denotes the induced bundle map.
Then $\cH_3$ is a $3$-stage Bott manifold, in fact, $\cH_3 = P(\uC \oplus \gamma^{x_1})$ where $\uC$ and $\gamma^{x_1}$ are both regarded as complex line bundles over $\cH_2$.  Therefore, the matrix corresponding to the Bott tower
$$\cH_3\stackrel{\pi_3}\longrightarrow \cH_2\stackrel{\pi_2}\longrightarrow \cH_1\stackrel{\pi_1}\longrightarrow \{\text{a point}\}$$
is given by
$$
    \left(
      \begin{array}{ccc}
        0 & 1 & 1 \\
          & 0 & 0 \\
          &   & 0 \\
      \end{array}
    \right).
$$
Since the pullback of the tautological line bundle over $\cH_2$ by $\rho_3$ in \eqref{eqn:H32} is the tautological line bundle over $\cH_3$, we have $\rho_3^*(x_2)=x_3$, while $\rho_3^*(x_1)=x_1$ which follows from the commutativity of the diagram \eqref{eqn:H32}.

Inductively, we shall define $\cH_n$ as follows:
\begin{equation} \label{eqn:suyoung_tower}
\begin{CD}
    \cH_n @>\rho_n>> \cH_{n-1} @>\rho_{n-1}>> \dots @>\rho_4>> \cH_3 @>\rho_3>> \cH_2\\
    @VV{\pi_n}V  @VV{\pi_{n-1}}V   @. @VV{\pi_3}V @VV{\pi_2}V\\
    \cH_{n-1} @>{\pi_{n-1}}>> \cH_{n-2} @>{\pi_{n-2}}>> \dots @>\pi_3>> \cH_2 @>{\pi_2}>> \cH_1.
\end{CD}
\end{equation}
Note that
\begin{equation} \label{eqn:rhon}
\cH_n\stackrel{\pi_n}\longrightarrow \cH_{n-1}\stackrel{\pi_{n-1}}\longrightarrow \dots \stackrel{\pi_2}\longrightarrow \cH_1
\stackrel{\pi_1}\longrightarrow \{\text{a point}\}
\end{equation}
is a Bott tower of height $n$ corresponding to the $n\times n$-matrix
\begin{equation} \label{eqn:matrixHn}
    \left(
      \begin{array}{ccccc}
        0 & 1 & 1 &\cdots  & 1 \\
          & 0 & 0 & \cdots & 0 \\
          &   & 0 &  \cdots & 0  \\
           &  &  & \ddots & \vdots \\
         &  &  &  & 0 \\
      \end{array}
    \right)
\end{equation}
and
\begin{equation} \label{eqn:Hn}
    H^\ast(\cH_n) = \Z[x_1, \ldots, x_n]/ (x_1^2 =0,\ x_j^2 =x_1x_j \text{ for }j=2, \ldots, n),
\end{equation}
so that $\alpha_1=0$ and $\alpha_j = x_1$ for all $j=2, \ldots, n$. Since $\alpha_j^2 = 0$ for any $j$, $\cH_n$ is a $\Q$-trivial Bott manifold by Proposition \ref{proposition:Q-trivial Bott manifold}.  We also note that $\rho_j\colon \cH_j\to \cH_{j-1}$ $(j>2)$ is a bundle map and pulls back the tautological line bundle over $\cH_{j-1}$ to that of $\cH_j$, so that
\begin{equation} \label{eqn:rhoj}
\begin{split}
&\rho_j^*(x_{j-1})=x_j \quad\text{for $j>2$, while}\\
&\rho_j^*(x_1)=x_1\quad \text{by the commutativity of \eqref{eqn:suyoung_tower}.}
\end{split}
\end{equation}

\begin{lemma} \label{lemm:mod2}
Square zero primitive elements in $H^2(\cH_n)$ are
\[
\text{$\pm x_1$ and $\pm(2x_j-x_1)$ for $j>1$.}
\]
In particular, their mod 2 reductions are equal to the mod 2 reduction of $x_1$.
\end{lemma}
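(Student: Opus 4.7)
The plan is to read the answer straight off Corollary~\ref{corollary: squared zero elements}, feeding in the explicit data of $\cH_n$ recorded in \eqref{eqn:matrixHn} and \eqref{eqn:Hn}: namely $\alpha_1=0$ and $\alpha_j=x_1$ for $j\ge 2$, all of which satisfy $\alpha_j^2=0$. Since all $n$ of the classes $\alpha_j^2$ vanish, that corollary guarantees exactly $n$ square zero primitive elements up to sign, each of the form $x_j-\tfrac12\alpha_j$ or $2x_j-\alpha_j$ for some $j$. So the task reduces to picking, for each $j$, which of the two shapes actually yields an integral primitive class.

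I would then examine the cases. For $j=1$, $\alpha_1=0$ makes $x_1-\tfrac12\alpha_1=x_1$ the primitive integral representative, while $2x_1$ is twice it and hence not primitive; this contributes $\pm x_1$. For $j\ge 2$ we have $\alpha_j=x_1$, and because $x_1$ belongs to the $\Z$-basis $\{x_1,\dots,x_n\}$ of $H^2(\cH_n)$ it is not divisible by $2$; consequently $x_j-\tfrac12 x_1$ fails to be an integral class, so the primitive representative must be $2x_j-x_1$ (and this is indeed primitive since the coefficients $2$ and $-1$ are coprime). Collecting the cases gives the announced list $\pm x_1$ together with $\pm(2x_j-x_1)$ for $j>1$, and the count matches the $n$ predicted by Corollary~\ref{corollary: squared zero elements}, so the enumeration is complete.

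The mod $2$ statement is then a one-line check: $x_1\equiv x_1$ and $2x_j-x_1\equiv x_1\pmod 2$. I do not anticipate a genuine obstacle; the only subtlety is the dichotomy between the two shapes produced by Corollary~\ref{corollary: squared zero elements}, which is resolved here by the $2$-indivisibility of $x_1$ in $H^2(\cH_n;\Z)$.
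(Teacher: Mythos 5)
Your proposal is correct and follows essentially the same route as the paper, which simply cites Corollary~\ref{corollary: squared zero elements} with the data $\alpha_1=0$, $\alpha_j=x_1$ ($j>1$) from \eqref{eqn:Hn} and calls the lemma an immediate consequence. You merely spell out the case analysis (resolving the dichotomy between $x_j-\tfrac12\alpha_j$ and $2x_j-\alpha_j$ via the $2$-indivisibility of $x_1$) that the paper leaves implicit.
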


\begin{proof}
Since $\alpha_1=0$ and $\alpha_j=x_1$ for $j>1$ in \eqref{eqn:Hn}, the lemma is an immediate consequence of Corollary~\ref{corollary: squared zero elements}.
\end{proof}

Note that the mod 2 reduction of a square zero element of $H^2(\cH_n)$ is either zero or equal to the mod 2 reduction of $x_1$ by Lemma~\ref{lemm:mod2}.

\begin{lemma} \label{lemma:bundle over H_n}
If $\alpha$ is a square zero element in $H^2(\cH_n)$, then
\[
P(\uC\oplus\gamma^\alpha)\cong
\begin{cases}
P(\uC\oplus\uC)=\cH_n\times \cH_1 \quad &\text{if $\alpha=0$ in $H^2(\cH_n)\otimes\Z/2$,}\\
P(\uC\oplus\gamma^{x_1})=\cH_{n+1} \quad&\text{if $\alpha=x_1$ in $H^2(\cH_n)\otimes\Z/2$,}
\end{cases}
\]
as bundles over $\cH_n$.
\end{lemma}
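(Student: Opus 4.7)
\emph{Plan.} My strategy is to use Lemma~\ref{lemm:line} to tensor $\uC\oplus\gamma^\alpha$ by an appropriate line bundle, reducing each case to showing an isomorphism of rank-2 complex vector bundles over $\cH_n$.

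In the case $\alpha\equiv 0\pmod 2$, I would write $\alpha=2\beta$ for some $\beta\in H^2(\cH_n)$. From $\alpha^2=4\beta^2=0$ and the fact that $H^*(\cH_n)$ is torsion-free (a $\Z$-basis consists of the square-free monomials in $x_1,\dots,x_n$, by reducing via \eqref{eqn:Hn}), we get $\beta^2=0$. Applying Lemma~\ref{lemm:line} with $L=\gamma^{-\beta}$ then yields
\[
P(\uC\oplus\gamma^\alpha)\;\cong\;P(\gamma^{-\beta}\oplus\gamma^\beta)
\]
as bundles over $\cH_n$. In the case $\alpha\equiv x_1\pmod 2$, I would write $\alpha=x_1+2\beta$; expanding $\alpha^2=0$ and using $x_1^2=0$ gives $4\beta(x_1+\beta)=0$, hence $\beta(x_1+\beta)=0$ by torsion-freeness, and Lemma~\ref{lemm:line} similarly gives
\[
P(\uC\oplus\gamma^\alpha)\;\cong\;P(\gamma^{-\beta}\oplus\gamma^{x_1+\beta}).
\]

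Next I would check that in each case the two rank-2 bundles at hand have matching Chern classes: in case 1, both $\gamma^{-\beta}\oplus\gamma^\beta$ and $\uC\oplus\uC$ have $c_1=0$ and $c_2=-\beta^2=0$; in case 2, both $\gamma^{-\beta}\oplus\gamma^{x_1+\beta}$ and $\uC\oplus\gamma^{x_1}$ have $c_1=x_1$ and $c_2=-\beta(x_1+\beta)=0$. If the rank-2 bundles are isomorphic, their projectivizations are isomorphic as bundles over $\cH_n$, which is exactly the conclusion.

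The main obstacle is upgrading this agreement of Chern classes to an actual isomorphism of rank-2 complex vector bundles over $\cH_n$. I would do this by obstruction theory for maps $\cH_n\to BU(2)$: because $\cH_n$ is a smooth toric manifold, it admits a CW-decomposition with cells only in even real dimensions, so $H^{\mathrm{odd}}(\cH_n;A)=0$ for every abelian group $A$, which automatically kills all Postnikov obstructions landing in odd-degree cohomology. For the remaining higher even-degree obstructions, I would induct on $n$ along the pullback tower \eqref{eqn:suyoung_tower}: the base case $n=1$ is classical, as topological rank-2 complex bundles on $\CP^1$ are classified by $c_1$ alone, and the inductive step exploits that $\cH_n\to\cH_{n-1}$ is the pullback of $\cH_{n-1}\to\cH_{n-2}$, letting me transport a bundle isomorphism on the base up to $\cH_n$. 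This would establish the rank-2 isomorphism and complete the proof of both cases of the lemma simultaneously.
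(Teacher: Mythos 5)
Your algebraic reductions are fine and parallel the paper's: tensoring by a line bundle via Lemma~\ref{lemm:line} and comparing total Chern classes is exactly the right move, and your computation that the relevant rank-2 bundles have equal Chern classes is correct. The gap is in the step you yourself flag as "the main obstacle": upgrading equality of Chern classes to an isomorphism of rank-2 complex vector bundles \emph{over $\cH_n$ itself}. That statement is false for general bases of real dimension $\geq 6$, and your two remedies do not rescue it. Vanishing of $H^{\mathrm{odd}}(\cH_n;A)$ only kills odd-degree obstructions, but the obstructions to homotoping two maps $\cH_n\to BU(2)$ with the same Chern classes also live in even degrees, e.g.\ in $H^6(\cH_n;\pi_6(BU(2)))=H^6(\cH_n;\pi_5(U(2)))=H^6(\cH_n;\Z/2)$, which is nonzero for $n\geq 3$; nothing in your argument shows these obstructions vanish. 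The proposed induction along \eqref{eqn:suyoung_tower} is also not workable as stated: the bundles $\gamma^{-\beta}\oplus\gamma^{\beta}$ (resp.\ $\gamma^{-\beta}\oplus\gamma^{x_1+\beta}$) need not be pulled back from $\cH_{n-1}$ --- for instance $\beta$ may involve $x_n$, as happens when $\alpha=a(2x_n-x_1)$ --- so there is no isomorphism on the base to "transport up," and the fiber-product structure of $\cH_n$ by itself says nothing about classifying arbitrary rank-2 bundles over it.

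The missing idea, which is how the paper proceeds, is to first pin down $\alpha$ using Lemma~\ref{lemm:mod2}: a square zero element of $H^2(\cH_n)$ is $ax_1$ or $a(2x_j-x_1)$. After the tensoring trick, every line bundle that appears is then pulled back from a base of real dimension at most $4$: from $\cH_1=\CP^1$ in the first case, and from $\cH_2$ in the second case, using that the bundle maps $\rho_i$ in \eqref{eqn:suyoung_tower} satisfy $\rho_i^*(x_{i-1})=x_i$ and $\rho_i^*(x_1)=x_1$, so $\gamma^{x_j}$ is the pullback of $\gamma^{x_2}$ over $\cH_2$. Over these low-dimensional bases the Chern-class comparison really does classify: trivially for $\CP^1$, and for $\cH_2$ via the exact sequence $[\cH_2,U/U(2)]\to[\cH_2,BU(2)]\to K(\cH_2)$, where $U/U(2)$ is $4$-connected and $K(\cH_2)$ is torsion free. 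One then pulls the resulting bundle isomorphism back up to $\cH_n$ by naturality. Without this reduction to a low-dimensional base (or some substitute argument controlling the even-degree obstructions), your proof does not go through.
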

\begin{proof}
By Lemma~\ref{lemm:mod2}, $\alpha$ is either $ax_1$ or $a(2x_j-x_1)$ for $j>1$, where $a$ is an integer.  Thus it suffices to prove
\begin{enumerate}
  \item \label{item:1} $P(\gamma^{ax_1}\oplus \uC)\cong P(\gamma^{(a+2b)x_1} \oplus \uC)$ as bundles for any $b \in \Z$,
  \item \label{item:2} $P(\gamma^{a( 2x_j-x_1)} \oplus \uC)\cong P(\gamma^{-ax_1} \oplus \uC)$ as bundles for any $j>1$.
\end{enumerate}

We first prove (1).  By Lemma~\ref{lemm:line} we have
\begin{equation*}
P(\gamma^{ax_1}\oplus \uC)\cong P((\gamma^{ax_1}\oplus \uC)\otimes \gamma^{bx_1})=P(\gamma^{(a+b)x_1}\oplus\gamma^{bx_1})\quad\text{as bundles}.
\end{equation*}
Therefore it suffices to prove
\begin{equation} \label{eqn:(1)}
P(\gamma^{(a+b)x_1}\oplus\gamma^{bx_1})\cong P(\gamma^{(a+2b)x_1}\oplus\uC)\quad\text{as bundles}.
\end{equation}
All line bundles involved in \eqref{eqn:(1)} are the pullback of line bundles over $\cH_1$ by a composition of the projections $\pi_i$'s in the tower \eqref{eqn:rhon}.  Therefore it suffices to prove \eqref{eqn:(1)} when the base space is $\cH_1$.  But then the two vector bundles $\gamma^{(a+b)x_1}\oplus\gamma^{bx_1}$ and $\gamma^{(a+2b)x_1}\oplus\uC$ in \eqref{eqn:(1)} are isomorphic because their total Chern classes are same and complex vector bundles over $\cH_1=\CP^1$ are classified by their total Chern classes as is well-known.

The proof of (2) is similar to that of (1).
By Lemma~\ref{lemm:line} we have
\begin{equation*}
P(\gamma^{a(2x_j-x_1)}\oplus \uC)\cong P((\gamma^{a(2x_j-x_1)}\oplus \uC)\otimes \gamma^{-ax_j})=P(\gamma^{a(x_j-x_1)}\oplus\gamma^{-ax_j}).
\end{equation*}
Therefore it suffices to prove
\begin{equation} \label{eqn:(2)}
P(\gamma^{a(x_j-x_1)}\oplus\gamma^{-ax_j})\cong P(\gamma^{-ax_1}\oplus\uC)\quad\text{as bundles}.
\end{equation}
As remarked at \eqref{eqn:rhoj}, $\rho_i\colon \cH_i\to \cH_{i-1}$ for $i>2$ is a bundle map and pulls back the tautological line bundle over $\cH_{i-1}$ to that over $\cH_i$ so that $\rho_i^*(x_{i-1})=x_i$.  Therefore $\gamma^{x_j}$ is the pullback of $\gamma^{x_2}$ over $\cH_2$ by a composition of the bundle maps $\rho_i$'s.  Moreover $\rho_i^*(x_1)=x_1$ as noted before.  Therefore it suffices to prove \eqref{eqn:(2)} when $j=2$ and the base space is $\cH_2$. But then the two vector bundles $\gamma^{a(x_j-x_1)}\oplus\gamma^{-ax_j}$ and $\gamma^{-ax_1}\oplus\uC$ in \eqref{eqn:(2)} are isomorphic because their total Chern classes are same and complex vector bundles of complex dimension two over $\cH_2$ are classified by their total Chern classes.  In fact the last assertion follows from an exact sequence
$$
    [\cH_2,U/U(2)] \to [\cH_2,BU(2)] \to [\cH_2,BU]=K(\cH_2)
$$
induced from a fibration $U/U(2)\to BU(2)\to BU$.  Here $[\cH_2,U/U(2)]=0$ because $\cH_2$ is of real dimension 4 and $U/U(2)$ is 4-connected
and $K(\cH_2)$ is torsion free since $H^{odd}(\cH_2)=0$, so that elements in $[\cH_2,BU(2)]$ can be distinguished by their Chern classes.
\end{proof}

\section{Cohomological rigidity of $\Q$-trivial Bott manifolds} \label{section: cohomological rigidity of Q-trivial}

For $n \in \N$, a finite sequence $\lambda = (\lambda_1, \ldots, \lambda_m )$ of positive integers is called a \emph{partition} of $n$ if $\sum_{1 \leq i \leq m} \lambda_i = n$ and $\lambda_1 \geq \cdots \geq \lambda_m \geq 1$.
We define $\cH_\lambda$ by
$$
    \cH_\lambda := \cH_{\lambda_1} \times \cdots \times \cH_{\lambda_m}.
$$
For instance, $(\CP^1)^n$ is $\cH_{(1, \ldots, 1)}$ and $\cH_n$ is $\cH_{(n)}$.
Note that
\begin{equation} \label{eqn:tensor}
\text{$H^\ast(\cH_{\lambda})=H^\ast(\cH_{\lambda_1}) \otimes \cdots \otimes H^\ast(\cH_{\lambda_m})$.}
\end{equation}

\begin{theorem} \label{theorem:Q-trivial Bott manifold}
\begin{enumerate}
\item An $n$-stage $\Q$-trivial Bott manifold is diffeomorphic to $\cH_{\lambda}$ for some partition $\lambda$ of $n$.
\item Let $\lambda$ and $\lambda'$ be two partitions of $n$. If $H^\ast(\cH_\lambda)$ is isomorphic to $H^\ast(\cH_{\lambda'})$ as graded rings, then $\lambda = \lambda'$.
\end{enumerate}
\noindent
Therefore, $\Q$-trivial Bott manifolds are distinguished by their cohomology rings with $\Z$-coefficients and the number of diffeomorphism classes in $n$-stage Bott manifolds is equal to the number of partitions of $n$.
\end{theorem}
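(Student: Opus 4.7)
The plan is to prove part (1) by induction on $n$ and to prove part (2) by extracting $\lambda$ from $H^\ast(\cH_\lambda)$ as a ring-theoretic invariant. Both parts rest on a \emph{support lemma}: under the K\"unneth decomposition
\[
H^\ast(\cH_\lambda) = H^\ast(\cH_{\lambda_1}) \otimes \cdots \otimes H^\ast(\cH_{\lambda_m}),
\]
every square-zero class in $H^2(\cH_\lambda)$ is supported on a single tensor factor. To see this, write $\alpha = \sum_i \beta_i$ with $\beta_i \in H^2(\cH_{\lambda_i})$. Each $H^\ast(\cH_{\lambda_i})$ is free over $\Z$ by \eqref{eqn:HBn}, and the cross terms $2\beta_i\beta_j$ with $i\neq j$ lie in pairwise distinct K\"unneth summands $H^2(\cH_{\lambda_i}) \otimes H^2(\cH_{\lambda_j})$. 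The equation $\alpha^2 = 0$ forces each $\beta_i \otimes \beta_j = 0$ and, separately, each $\beta_i^2 = 0$; since the summands are tensor products of free modules, at most one $\beta_i$ can be nonzero.

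\textbf{Part (1).} I would induct on $n$, with base case $B_1 = \CP^1 = \cH_{(1)}$. For the inductive step write $B_n = P(\uC\oplus\gamma^{\alpha_n})$ over $B_{n-1}$. By Proposition~\ref{proposition:Q-trivial Bott manifold}, $B_{n-1}$ is $\Q$-trivial and $\alpha_n^2=0$, so the inductive hypothesis gives a diffeomorphism $B_{n-1}\cong \cH_\mu$ for some partition $\mu=(\mu_1,\dots,\mu_m)$ of $n-1$; transport $\alpha_n$ through the induced ring isomorphism to $H^2(\cH_\mu)$. The support lemma supplies an index $i$ and a square-zero class $\beta\in H^2(\cH_{\mu_i})$ with $\alpha_n = \pi_i^\ast\beta$, where $\pi_i\colon \cH_\mu\to \cH_{\mu_i}$ is the projection. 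Since line bundles are classified by their first Chern classes, $\gamma^{\alpha_n}\cong \pi_i^\ast\gamma^\beta$, and therefore
\[
B_n \;\cong\; \cH_{\mu_1}\times\cdots\times P(\uC\oplus \gamma^\beta)\times\cdots\times \cH_{\mu_m}.
\]
Lemma~\ref{lemma:bundle over H_n} identifies $P(\uC\oplus\gamma^\beta)$ with either $\cH_{\mu_i}\times \cH_1$ or $\cH_{\mu_i+1}$, and reordering the parts of the resulting product in non-increasing order yields $B_n\cong \cH_\lambda$ for an appropriate partition $\lambda$ of $n$.

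\textbf{Part (2).} I would recover the multiset $\{\lambda_1,\dots,\lambda_m\}$ from $H^\ast(\cH_\lambda)$ by grouping primitive square-zero elements of $H^2(\cH_\lambda)$ according to their mod $2$ reductions. By the support lemma, every primitive square-zero element of $H^2(\cH_\lambda)$ lives inside a unique factor $\cH_{\lambda_i}$; within that factor, Lemma~\ref{lemm:mod2} (together with the trivial case $\lambda_i=1$) exhibits exactly $\lambda_i$ such elements up to sign, all sharing the common mod $2$ reduction $\bar{x}_1^{(i)} \in H^2(\cH_\lambda;\Z/2)$. Because the classes $\bar{x}_1^{(1)},\dots,\bar{x}_1^{(m)}$ are linearly independent in $H^2(\cH_\lambda;\Z/2)$, distinct factors contribute elements with distinct mod $2$ reductions. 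Letting $N(y)$ denote the number of primitive square-zero lifts of $y$ up to sign, the multiset
\[
\{\, N(y) : y\in H^2(\cH_\lambda;\Z/2),\ N(y)>0 \,\} = \{\lambda_1,\dots,\lambda_m\}
\]
is manifestly a ring-theoretic invariant of $H^\ast(\cH_\lambda)$, and since a partition is determined by its underlying multiset, $\lambda$ is determined by $H^\ast(\cH_\lambda)$.

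\textbf{Main obstacle.} The algebraic backbone is the support lemma; once it is in place, part (1) is a short induction powered by Lemma~\ref{lemma:bundle over H_n}, and part (2) is a bookkeeping argument fed by Lemma~\ref{lemm:mod2}. The subtlest point is the inductive step in part (1), where one must verify that after transporting $\alpha_n$ through the diffeomorphism $B_{n-1}\cong \cH_\mu$ the class really lives inside one tensor factor rather than merely being ring-theoretically equivalent to such a class. The support lemma delivers exactly this conclusion from the single hypothesis $\alpha_n^2=0$, which is why it is the natural starting point.
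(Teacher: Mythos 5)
Your argument is correct, and for part (1) it is essentially the paper's proof: the same induction on $n$ via Proposition~\ref{proposition:Q-trivial Bott manifold}, with your ``support lemma'' being a careful write-up of the paper's one-line claim that a square-zero class $\alpha_n$ must lie in a single tensor factor (``because otherwise $\alpha_n^2$ cannot vanish''), followed by the same appeal to Lemma~\ref{lemma:bundle over H_n}. For part (2) you take a mildly different route. The paper argues that any graded ring isomorphism $\varphi$ sends each subring $H^*(\cH_{\lambda_i})$ into some $H^*(\cH_{\lambda'_j})$ --- because the primitive square-zero classes of one factor share a common mod $2$ reduction (Lemma~\ref{lemm:mod2}) and generate that factor over $\Q$ --- and then deduces $\lambda=\lambda'$ from the decomposition \eqref{eqn:tensor}. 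You instead extract the partition directly as the multiset of counts $N(y)$ of primitive square-zero classes up to sign, grouped by their mod $2$ reduction $y$, and observe this multiset is $\{\lambda_1,\dots,\lambda_m\}$. Both arguments rest on exactly the same two facts (square-zero classes are supported in a single factor; within a factor all primitive square-zero classes reduce to $x_1$ mod $2$), but your counting invariant makes the final step of (2) more explicit and self-contained, whereas the paper's formulation carries the extra payoff that an isomorphism respects the factorization, which it reuses in Section~\ref{sect:auto} to obtain the description \eqref{eqn:AutH} of $\Aut(H^*(\cH_\lambda))$.
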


\begin{proof}
(1) We prove the statement (1) by induction on $n$. Let $B_n$ be an $n$-stage Bott manifold in the tower \eqref{eqn:Bott tower} and suppose that $B_n$ is $\Q$-trivial.    When $n=1$, the statement is trivial  since $B_1=\C P^1=\cH_1$.

Assume the statement (1) holds for $(n-1)$-stage $\Q$-trivial Bott manifolds. Then, since $B_{n-1}$ is also $\Q$-trivial by Proposition~\ref{proposition:Q-trivial Bott manifold},  we may assume that $B_{n-1}=\cH_\mu$ for some partition $\mu$ of $n-1$ by the induction assumption and $B_n = P(\gamma^{\alpha_n} \oplus \uC)$ with $\alpha_n \in H^2(\cH_{\mu})$.  We note that $\alpha_n^2 = 0$ by Proposition~\ref{proposition:Q-trivial Bott manifold} because $B_n$ is $\Q$-trivial.  If $\alpha_n=0$, then $B_n=\cH_\mu\times \cH_1$ and the theorem holds in this case.  Suppose $\alpha_n\not=0$.  Then $\alpha_n$ must sit in $H^2(\cH_{\mu_j})$ for some component $\mu_j$ of the partition $\mu$
in \eqref{eqn:tensor} with $\lambda$ replaced by $\mu$ because otherwise $\alpha_n^2$ cannot vanish.  Therefore the line bundle $\gamma^{\alpha_n}$ over $\cH_\mu$ can be obtained by pulling back  a line bundle over $\cH_{\mu_j}$. It follows that $B_n$ is diffeomorphic to
\[
P(\gamma^{\alpha_n}\oplus\underline{\C})\times \prod_{i\not=j}\cH_{\mu_i}
\]
where $\gamma^{\alpha_n}$ is regarded as a line bundle over $\cH_{\mu_j}$, $\mu_i$ runs over all components of $\mu$ different from $\mu_j$. Then the statement (1) follows from Lemma~\ref{lemma:bundle over H_n}.

(2)  Any (non-zero) square zero element in $H^2(\cH_\lambda)$ sits in $H^2(\cH_{\lambda_i})$ for some component $\lambda_i$ of $\lambda$ as noted above and it follows from Lemma~\ref{lemm:mod2} that the mod 2 reductions of a square zero primitive element in $H^2(\cH_{\lambda_i})$ and that in $H^2(\cH_{\lambda_j})$ are same if and only if $i=j$. Therefore, if $\varphi : H^\ast(\cH_{\lambda}) \to H^\ast(\cH_{\lambda'})$ is a graded ring homomorphism, then all square zero primitive elements in $H^2(\cH_{\lambda_i})$ map into $H^2(\cH_{\lambda'_j})$ by $\varphi$ for some component $\lambda'_j$ of $\lambda'$.  Since the square zero primitive elements in $H^2(\cH_{\lambda_i})$ generate $H^*(\cH_{\lambda_i})$ over $\Q$, this implies that  $\varphi(H^*(\cH_{\lambda_i}))$ is contained in $H^*(\cH_{\lambda'_j})$.  If $\varphi$ is in particular an isomorphism, then this together with \eqref{eqn:tensor} implies the statement (2).
\end{proof}

\begin{remark}
One can show that $\cH_\lambda$'s, in other words $\Q$-trivial Bott manifolds, can be distinguished by their cohomology rings even with $\Z/2$- or $\Z_{(2)}$-coefficients.
It is not true that all Bott manifolds can be distinguished by their cohomology rings with $\Z/2$-coefficients (e.g. 3-stage Bott manifolds are such examples, see \cite{Ch-Ma-Su-2010}), but it might be true with $\Z_{(2)}$-coefficients, see \cite{ch-su-pre}.
\end{remark}

\section{Automorphisms of $\Q$-trivial Bott manifolds} \label{sect:auto}

By Theorem~\ref{theorem:Q-trivial Bott manifold} we may assume that an $n$-stage Bott manifold is $\cH_\lambda$ where $\lambda$ is a partition of $n$. In this section we shall study the group $\Aut(H^*(\cH_\lambda))$ of graded ring automorphisms of $H^*(\cH_\lambda)$ and prove the following.

\begin{theorem} \label{theo:diffeo}
Any element of $\Aut(H^*(\cH_\lambda))$ is induced from a diffeomorphism of $\cH_\lambda$.
\end{theorem}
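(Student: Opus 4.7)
My plan is to reduce the statement to the case of a single factor $\cH_n$, identify $\Aut(H^*(\cH_n))$ with the hyperoctahedral group $\{\pm 1\}^n\rtimes S_n$, and then realize every element of this group by a concrete self-diffeomorphism.

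First I would invoke the argument in the proof of Theorem~\ref{theorem:Q-trivial Bott manifold}(2): any graded ring automorphism $\varphi$ of $H^*(\cH_\lambda)=\bigotimes_i H^*(\cH_{\lambda_i})$ must send each tensor factor $H^*(\cH_{\lambda_i})$ isomorphically to some $H^*(\cH_{\lambda_j})$ with $\lambda_j=\lambda_i$, and the resulting permutation of like-sized factors is induced by the obvious factor-swap diffeomorphisms of the product $\cH_\lambda$. To identify $\Aut(H^*(\cH_n))$ explicitly, I would set $e_1:=x_1$ and $e_j:=2x_j-x_1$ for $j\geq 2$; by Corollary~\ref{corollary: squared zero elements} the set $\{\pm e_j\}_{j=1}^n$ consists exactly of the primitive square-zero elements of $H^2(\cH_n)$, so $\varphi$ permutes it with signs. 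Conversely, since $x_j=(e_j+e_1)/2$ and every combination $e_a\pm e_b$ lies in $2H^2(\cH_n)$ (as $e_1+e_k=2x_k$ and $e_k-e_\ell=2(x_k-x_\ell)$), any signed permutation of $\{e_1,\ldots,e_n\}$ extends to an integral linear map on the $x_j$'s, and a brief check confirms it preserves the relations $x_j^2=x_1x_j$; hence $\Aut(H^*(\cH_n))\cong \{\pm 1\}^n\rtimes S_n$.

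Next I would realize this group by diffeomorphisms. The subgroup $\{\pm 1\}^n\rtimes S_{n-1}$ of index $n$ is the straightforward part: the $S_{n-1}$ permuting $\{e_2,\ldots,e_n\}$ comes from permuting the $n-1$ copies of $\cH_2$ in the fibre-product $\cH_n=\cH_2\times_{\CP^1}\cdots\times_{\CP^1}\cH_2$; the sign change $e_k\mapsto -e_k$ for $k\geq 2$ (equivalently $x_k\mapsto x_1-x_k$, fixing the other $x_j$'s) is induced by the fibre-swap $\phi_k:\cH_n\to\cH_n$ descending from $(z_k,w_k)\mapsto (w_k,z_k)$ on $(S^3)^n$ via the involutive reparametrisation $t_1\mapsto t_1^{-1}$, $t_k\mapsto t_1^{-1}t_k$ of the $(S^1)^n$-action (a restriction-to-section calculation on the tautological class will verify the cohomology action); and complex conjugation on $\cH_n$ inverts every Chern class, giving $e_j\mapsto -e_j$ for all $j$, so composing with the $\phi_k$'s yields the sign change on $e_1$ alone.

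The hard part will be to realize one further coset representative: a transposition $(1,k)$ swapping $e_1$ with some $e_k$. For the base case $n=2$, I would use the identification $\cH_2\cong\CP^2\#\overline{\CP^2}$: the diffeomorphism acting as the identity on the $\CP^2$ summand and as complex conjugation on the $\overline{\CP^2}$ summand glues smoothly (because orientation-preserving self-diffeomorphisms of $S^3$ are isotopic to the identity, by the Smale conjecture proved by Hatcher) and acts on $H^2$ by $H_1\mapsto H_1$, $H_2\mapsto -H_2$, which is the transposition $e_1\leftrightarrow e_2$ in the basis $e_1=H_1+H_2$, $e_2=H_1-H_2$. For the inductive step, given a self-diffeomorphism $\rho:\cH_{n-1}\to\cH_{n-1}$ realizing the transposition, I would form the pullback bundle $\rho^*\cH_n=P(\uC\oplus\gamma^{\rho^*x_1})=P(\uC\oplus\gamma^{2x_2-x_1})$, which is isomorphic to $\cH_n=P(\uC\oplus\gamma^{x_1})$ as a bundle over $\cH_{n-1}$ by Lemma~\ref{lemma:bundle over H_n} (since $2x_2-x_1\equiv x_1\pmod 2$); composing the canonical bundle map $\rho^*\cH_n\to\cH_n$ with such an isomorphism will produce the desired self-diffeomorphism of $\cH_n$, and tracking Chern classes using Remark~\ref{rema:line} will confirm that it induces the transposition on $H^*(\cH_n)$.
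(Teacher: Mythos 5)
Your overall route coincides with the paper's in all structural respects: the reduction to a single factor $\cH_n$ via the argument of Theorem~\ref{theorem:Q-trivial Bott manifold}(2) together with factor-swapping diffeomorphisms of the product, the identification of $\Aut(H^*(\cH_n))$ with the signed permutations of the square-zero primitive classes $e_1=x_1$, $e_j=2x_j-x_1$ (the paper's Lemma~5.3, and your integrality check via $e_a\pm e_b\in 2H^2$ is correct), the realization of the permutations of $e_2,\dots,e_n$ by permuting the fibre-product coordinates (the paper's Lemma~\ref{lemm:xnxj}), the $n=2$ case on $\CP^2\#\overline{\CP^2}$ (the paper's Lemma~\ref{lemm:H2}; your glued ``identity on one summand, conjugation on the other'' map is a legitimate variant, and the gluing needs nothing as strong as Hatcher since conjugation restricted to the separating $S^3$ lies in $SO(4)$), and the propagation up the tower by pulling back $P(\uC\oplus\gamma^{x_1})\to\cH_{n-1}$ along the realizing diffeomorphism and invoking Lemma~\ref{lemma:bundle over H_n} — this is exactly the paper's Lemma~\ref{lemm:ext}. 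Using global complex conjugation to get the simultaneous sign flip is also fine. The main stylistic difference is that you realize a generating set of the hyperoctahedral group directly, whereas the paper runs a three-case induction on $n$; these are interchangeable once all generators are realized.

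There is, however, one genuine gap: the sign change $e_k\mapsto-e_k$ fixing the other $e_j$ (equivalently $x_k\mapsto x_1-x_k$ with all other $x_j$ fixed). Your proposed construction — the plain swap $(z_k,w_k)\mapsto(w_k,z_k)$ on the $k$-th $S^3$ factor, identity elsewhere, descending via the reparametrisation $t_1\mapsto t_1^{-1}$, $t_k\mapsto t_1^{-1}t_k$ — does not descend to $\cH_n$: with that reparametrisation the equivariance fails already on the first factor (the identity map cannot intertwine the weight $t_1$ with $t_1^{-1}$) and on every other twisted factor $j\neq 1,k$ (where the weight $t_1^{-1}t_j$ would have to become $t_1t_j$). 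So this generator, which is precisely the nontrivial content of the paper's Lemma~\ref{lemm:xn}, is not actually produced by your argument; without it you also lose the lone sign change on $e_1$, which you obtain only by composing with global conjugation. The paper proves Lemma~\ref{lemm:xn} by composing the two bundle maps of Lemma~\ref{lemm:line} (pass to the dual bundle, then tensor with $\gamma^{x_1}$) and computing the effect on $x_n$ via Remark~\ref{rema:line} and the fibrewise tangent bundle formula \eqref{eqn:tf}. If you prefer a quotient-level construction, the correct map is the identity on all factors except the $k$-th, and $(z_k,w_k)\mapsto(\bar w_k,\bar z_k)$ there, which intertwines the action with the reparametrisation $t_k\mapsto t_1t_k^{-1}$ (all other circle coordinates fixed), covers the identity of the base and induces $x_k\mapsto x_1-x_k$; this is just the quotient description of the composite $c\circ b$ in Lemma~\ref{lemm:xn}. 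With that one step repaired, your proof goes through.
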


Since $\Q$-trivial Bott manifolds are distinguished by their cohomology rings by Theorem~\ref{theorem:Q-trivial Bott manifold}, the theorem above implies the following.

\begin{corollary} \label{coro:diffeo}
Any cohomology ring isomorphism between two $\Q$-trivial Bott manifolds is induced from a diffeomorphism.
\end{corollary}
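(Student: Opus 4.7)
The plan is to reduce the corollary to a purely formal manipulation that combines Theorem~\ref{theo:diffeo} with the classification Theorem~\ref{theorem:Q-trivial Bott manifold}. Suppose $B$ and $B'$ are $\Q$-trivial Bott manifolds and $\psi\colon H^*(B) \to H^*(B')$ is a graded ring isomorphism; I want to produce a diffeomorphism $h\colon B' \to B$ with $h^* = \psi$.

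First I would apply Theorem~\ref{theorem:Q-trivial Bott manifold}(1) to pick diffeomorphisms $f\colon B \to \cH_\lambda$ and $f'\colon B' \to \cH_{\lambda'}$ for partitions $\lambda$ and $\lambda'$. The existence of $\psi$ yields a graded ring isomorphism $H^*(\cH_\lambda) \cong H^*(\cH_{\lambda'})$, and Theorem~\ref{theorem:Q-trivial Bott manifold}(2) then forces $\lambda = \lambda'$, so both $B$ and $B'$ can be modeled on the \emph{same} $\cH_\lambda$. Next I would transport $\psi$ across these identifications by setting
\[
\varphi := \big((f')^*\big)^{-1} \circ \psi \circ f^* \in \Aut\!\big(H^*(\cH_\lambda)\big),
\]
which is a graded ring automorphism by construction. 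Theorem~\ref{theo:diffeo} then supplies a self-diffeomorphism $g\colon \cH_\lambda \to \cH_\lambda$ with $g^* = \varphi$. Finally, taking $h := f^{-1} \circ g \circ f'\colon B' \to B$ and applying the contravariance rule $(u \circ v)^* = v^* \circ u^*$ gives $h^* = (f')^* \circ g^* \circ (f^*)^{-1} = (f')^* \circ \varphi \circ (f^*)^{-1} = \psi$, as required.

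Structurally, all of the difficulty is packaged into Theorem~\ref{theo:diffeo}: once that theorem and Theorem~\ref{theorem:Q-trivial Bott manifold} are in hand, the corollary is a formal conjugation argument with no independent obstacle. The genuine work of this section therefore sits entirely in Theorem~\ref{theo:diffeo}, where one must geometrically realize each ring automorphism of $H^*(\cH_\lambda)$, and that is the only place where I would expect real effort to be required.
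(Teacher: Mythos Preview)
Your argument is correct and is precisely the formal conjugation the paper has in mind: the paper simply asserts that the corollary follows from Theorem~\ref{theo:diffeo} together with Theorem~\ref{theorem:Q-trivial Bott manifold}, and you have spelled out that deduction explicitly and accurately. There is no substantive difference in approach.
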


The rest of this section is devoted to the proof of Theorem~\ref{theo:diffeo}.  Remember that the square zero primitive elements in $H^2(\cH_n)$ are $\pm x_1$ and $\pm(2x_j-x_1)$ for $j>1$ by Lemma~\ref{lemm:mod2}.

\begin{lemma}
An automorphism of $H^*(\cH_n)$ permutes  $\pm x_1$ and $\pm(2x_j-x_1)$ for $j>1$ up to sign.  On the other hand, any permutation of $\pm x_1$ and $\pm(2x_j-x_1)$ for $j>1$ up to sign induces an automorphism of $H^*(\cH_n)$.

Therefore, $\Aut(H^*(\cH_n))$ is isomorphic to a semi-direct product $(\Z/2)^n\rtimes \frak S_n$ where $\frak S_n$ denotes the symmetric group on $n$ letters and the action of $\frak S_n$ on $(\Z/2)^n$ is the natural permutation of factors of $(\Z/2)^n$.
\end{lemma}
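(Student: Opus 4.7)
The plan is to introduce $z_1 := x_1$ and $z_j := 2x_j - x_1$ for $j > 1$, which by Lemma~\ref{lemm:mod2} represent, up to sign, exactly the square-zero primitive elements of $H^2(\cH_n)$. The first assertion then comes essentially for free: square-zeroness and primitivity are intrinsic to the graded ring, so every $\varphi \in \Aut(H^*(\cH_n))$ must restrict to a signed permutation $\varphi(z_i) = \epsilon_i z_{\sigma(i)}$ for some $\sigma \in \frak S_n$ and $\epsilon \in \{\pm 1\}^n$.

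The heart of the lemma is the converse, which is where I expect most of the work. Given $(\sigma, \epsilon)$, I will define $\varphi$ on generators by the unique formulas forced by $2x_j = z_1 + z_j$, namely
\[
\varphi(x_1) = \epsilon_1 z_{\sigma(1)}, \qquad \varphi(x_j) = \tfrac{1}{2}\bigl(\epsilon_1 z_{\sigma(1)} + \epsilon_j z_{\sigma(j)}\bigr) \quad (j > 1),
\]
and then verify two things. First, integrality: after expanding each $z_{\sigma(\cdot)}$ in the $x$-basis (a short case split on whether $\sigma(1)$ or $\sigma(j)$ equals $1$), the coefficient of $x_1$ in $\epsilon_1 z_{\sigma(1)} + \epsilon_j z_{\sigma(j)}$ is always of the form $\pm\epsilon_1 \pm \epsilon_j \in \{0, \pm 2\}$, while every other coefficient is visibly in $2\Z$. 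Second, preservation of the defining relations $x_1^2 = 0$ and $x_j^2 = x_1 x_j$: the first is immediate from $z_{\sigma(1)}^2 = 0$, and for the second I will square the identity $2\varphi(x_j) - \varphi(x_1) = \epsilon_j z_{\sigma(j)}$ and use $z_{\sigma(j)}^2 = 0 = \varphi(x_1)^2$ to obtain $4\bigl(\varphi(x_j)^2 - \varphi(x_1)\varphi(x_j)\bigr) = 0$, then cancel the factor of $4$ by $\Z$-torsion freeness of $H^*(\cH_n)$.

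Applying the same recipe to $(\sigma^{-1}, \epsilon')$ with $\epsilon'_i := \epsilon_{\sigma^{-1}(i)}$ produces a two-sided inverse, so the constructed $\varphi$ is indeed an automorphism. For the group isomorphism, the assignment $\varphi \mapsto (\sigma, \epsilon)$ is injective because $\{z_1, \ldots, z_n\}$ is a $\Q$-basis of $H^2(\cH_n)$ and $H^*(\cH_n)$ is $\Z$-torsion free, so $\varphi$ is determined by the $\varphi(z_i)$; surjectivity was just established, and the signed-permutation group on $n$ letters is by construction $(\Z/2)^n \rtimes \frak S_n$ with $\frak S_n$ acting by coordinate permutation, matching the statement. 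The only real obstacle in the plan is the integrality-plus-relations verification; the rest amounts to bookkeeping.
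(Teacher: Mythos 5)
Your proposal is correct and follows essentially the same route as the paper: identify the square-zero primitive elements via Lemma~\ref{lemm:mod2}, extend a signed permutation of them linearly to the generators $x_i$ (checking integrality), verify the defining relations $x_1^2=0$ and $x_j^2=x_1x_j$ so the map descends to a ring endomorphism, and obtain the inverse from the inverse signed permutation. Your device of squaring $2\varphi(x_j)-\varphi(x_1)=\epsilon_j z_{\sigma(j)}$ and cancelling $4$ by torsion-freeness is just a tidy way of carrying out the relation check that the paper states without detail.
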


\begin{proof}
The first statement is obvious.  Suppose that $\varphi$ is a permutation of $\pm x_1$ and $\pm(2x_j-x_1)$ for $j>1$ up to sign.  Then $\varphi(x_1)=\pm x_1$ or $\pm(2x_k-x_1)$ for some $k>1$.  In any case one can easily check that if we extend $\varphi$ linearly, then $\varphi(x_i)$ is integral (i.e., a linear combination of $x_\ell$'s over $\Z$) for any $i$.  For instance, if
\[
\varphi(x_1)=2x_k-x_1,\ \varphi(2x_i-x_1)=x_1,\ \varphi(2x_j-x_1)=-(2x_\ell-x_1) \text{ for $j\not=i$},
\]
then a simple computation shows that
\[
\varphi(x_i)=x_k\text{ and } \varphi(x_j)=x_k-x_\ell.
\]
Thus the linear extension of $\varphi$ defines an endomorphism of $H^2(\cH_n)$.  Moreover, one can also check that $\varphi(x_1)^2=0$ and $\varphi(x_j)^2=\varphi(x_1)\varphi(x_j)$ for $j>1$.  This ensures that $\varphi$ extends to a graded ring endmorphism $\overline{\varphi}$ of $H^*(\cH_n)$ since the ideal in \eqref{eqn:Hn} is generated by $x_1^2$ and $x_j^2-x_1x_j$ for $j>1$.  Similarly, $\varphi^{-1}$ induces a graded ring endomorphism $\overline{\varphi^{-1}}$ of $H^*(\cH_n)$ and clealry $\overline{\varphi^{-1}}$ gives the inverse of $\overline{\varphi}$, so $\overline{\varphi}$ is an automorphism of $H^*(\cH_n)$.  This proves the lemma.
\end{proof}

We write $\lambda=(d_1^{a_1},\dots,d_k^{a_k})$ where $d_1>\dots>d_k$ and $d_i^{a_i}$ denotes $a_i$ copies of $d_i$ for $i=1,\dots,k$. Then
\[
H^*(\cH_\lambda)=\bigotimes_{i=1}^k H^*(\cH_{d_i})^{\otimes a_i}.
\]
The proof of (2) in Theorem~\ref{theorem:Q-trivial Bott manifold} shows that an automorphism of $H^*(\cH_\lambda)$ maps factors of $H^*(\cH_{d_i})^{\otimes a_i}$ to themselves for each $i$, so that
\begin{equation} \label{eqn:AutH}
\Aut(H^*(\cH_\lambda))=\prod_{i=1}^k\Aut(H^*(\cH_{d_i})^{\otimes a_i})=\prod_{i=1}^k\Aut(H^*(\cH_{d_i}))^{a_i}\rtimes \frak S_{a_i}
\end{equation}
where the action of $\frak S_{a_i}$ on $\Aut(H^*(\cH_{d_i}))^{a_i}$ is the natural permutation of factors of $\Aut(H^*(\cH_{d_i}))^{a_i}$.

A permutation of factors of $\Aut(H^*(\cH_{d_i}))^{a_i}$ is induced from a permutation of factors of $\cH_{d_i}^{a_i}$, which is a diffeomorphism, so it suffices to prove Theorem~\ref{theo:diffeo} when $\lambda=(n)$ by \eqref{eqn:AutH}.  We first prove it when $n=2$.

\begin{lemma} \label{lemm:H2}
Any element of $\Aut(H^*(\cH_2))$, which permutes $\pm x_1$ and $\pm(2x_2-x_1)$ up to sign, is induced from a diffeomorphism of $\cH_2$.
\end{lemma}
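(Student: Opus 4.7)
The plan is to first determine $\Aut(H^*(\cH_2))$ explicitly and then realize a generating set by diffeomorphisms. By Corollary~\ref{corollary: squared zero elements}, the primitive square-zero elements of $H^2(\cH_2)$ up to sign are $x_1$ and $2x_2-x_1$, so every ring automorphism acts as a signed permutation on the four-element set $\{\pm x_1,\pm(2x_2-x_1)\}$; a quick check on the relations $x_1^2=0$ and $x_2^2=x_1 x_2$ shows conversely that every such signed permutation extends to a ring automorphism. Thus $\Aut(H^*(\cH_2))$ is the dihedral group $D_4$ of order $8$, and it suffices to realize three involutions generating it, say $\sigma_1\colon(x_1,x_2)\mapsto(-x_1,-x_2)$, $\sigma_b\colon(x_1,x_2)\mapsto(x_1,x_1-x_2)$, and one further involution lying outside the abelian subgroup they generate.

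The first two generators are straightforward. Complex conjugation of $\cH_2$ as a complex manifold is orientation-preserving (since $\dim_\C\cH_2=2$) and realizes $\sigma_1$. For $\sigma_b$ I would apply Lemma~\ref{lemm:line} to $E=\uC\oplus\gamma^{x_1}$: the composition of the anti-linear bundle isomorphism $b\colon P(E)\to P(E^*)=P(\uC\oplus\gamma^{-x_1})$ with the tensor-by-$\gamma^{x_1}$ map $c\colon P(\uC\oplus\gamma^{-x_1})\to P(\uC\oplus\gamma^{x_1})=\cH_2$ is a self-diffeomorphism of $\cH_2$; using Remark~\ref{rema:line} together with the pullback formula for the tautological Chern class under tensoring, one computes $(c\circ b)^*(x_1)=x_1$ and $(c\circ b)^*(x_2)=x_1-x_2$, which is exactly $\sigma_b$.

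The main obstacle is producing the third generator, since $\cH_2=\Sigma_1$ admits a unique holomorphic $\CP^1$-bundle structure, so no biholomorphism or toric symmetry can realize an automorphism outside the abelian part. I would identify $\cH_2$ smoothly with $\CP^2\sharp\overline{\CP^2}$ and work in the basis $h,e\in H^2$ with $h^2=1$, $e^2=-1$, $he=0$; the required generator can be chosen to act as $(h,e)\mapsto(h,-e)$. I would build it as a self-diffeomorphism supported in a tubular neighborhood $U(E)\cong O_E(-1)$ of the exceptional divisor $E$: apply complex conjugation on $U(E)$, reversing the orientation of $E$, and interpolate radially to the identity on $\partial U(E)\cong S^3$ using the connectedness of $\mathrm{Diff}^+(S^3)$. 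Outside $U(E)$ the map is the identity; it therefore fixes $h$ (representable by a hyperplane in the blow-down disjoint from $U(E)$) and sends $e\mapsto -e$. Composing this diffeomorphism with the previously constructed ones then realizes all eight elements of $\Aut(H^*(\cH_2))$.
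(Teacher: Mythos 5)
Your proposal is correct, but it realizes the group by different diffeomorphisms than the paper does. Both arguments share the same skeleton: $\Aut(H^*(\cH_2))$ is the signed permutation group of the square-zero primitive classes, of order $8$, and one realizes a generating set geometrically on $\cH_2\cong\CP^2\sharp\overline{\CP^2}$. The paper's three generators are built by equivariant connected sums: complex conjugation on both summands (giving $-I$), conjugation on one summand and the holomorphic involution $[z_1,z_2,z_3]\mapsto[z_1,z_2,-z_3]$ on the other (giving a single sign change), and the reflection interchanging the two punctured summands (giving the swap); this needs no input beyond the existence of common fixed points. Your route instead takes global complex conjugation of $\Sigma_1$ for $-I$; the composite $c\circ b$ of the dualization and tensoring maps of Lemma~\ref{lemm:line} for $x_1\mapsto x_1$, $x_2\mapsto x_1-x_2$ -- which is exactly the paper's later Lemma~\ref{lemm:xn} specialized to $n=2$, so you are reusing machinery the paper only deploys afterwards; and for the remaining generator a diffeomorphism supported in a tubular neighborhood of the exceptional sphere, namely conjugation on the disk bundle of $O_E(-1)$ damped to the identity near the boundary. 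Two small points deserve attention. First, to see that $(h,e)\mapsto(h,-e)$ together with your $\sigma_1,\sigma_b$ generates all eight automorphisms, you need the dictionary $h=x_2$, $e=x_1-x_2$ (up to sign), so that $x_1=h+e$, $2x_2-x_1=h-e$ and $\sigma_b$ becomes the swap of $h$ and $e$; this is the Poincar\'e-duality identification made parenthetically in the paper and should be stated, though it is a one-line check. Second, the appeal to Cerf's theorem on $\pi_0\mathrm{Diff}^+(S^3)$ is heavier than necessary: the restriction of conjugation to the unit circle bundle of $O_E(-1)$ is $(z_1,z_2)\mapsto(\bar z_1,\bar z_2)$ on $S^3\subset\C^2$, an element of $SO(4)$, hence isotopic to the identity through isometries; with that replacement your construction is elementary and the whole argument is sound, and indeed your second generator has the advantage of generalizing to all $\cH_n$, whereas the paper's connected-sum constructions are specific to dimension~$4$.
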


\begin{proof}
As remarked in Example~\ref{example:Hirzebruch surface}, $\cH_2=\Sigma_1$ is diffeomorphic to $\CP^2\#\overline{\CP^2}$.  Let $u$ and $v$ be elements of $H_2( \CP^2\#\overline{\CP^2})$ represented by a canonical submanifold $\CP^1$ in $\CP^2$ and $\overline{\CP^2}$ respectively.  They are a basis of $H_2( \CP^2\#\overline{\CP^2})$.  (Through the Poincar\'e duality, $u$ and $v$ correspond to $x_2$ and $x_1-x_2$ up to sign since the self-intersection numbers of $u$ and $v$ are $\pm 1$ while squares of $x_2$ and $x_2-x_1$ are a cofundamental class $x_1x_2$ up to sign.) It suffices to show that any permutation of $\pm u$ and $\pm v$ up to sign can be represented by a diffeomorphism of $\CP^2\#\overline{\CP^2}=\cH_2$ since the number of those permutations is 8 which agrees with the number of elements in $\Aut(H^*(\cH_2))\cong (\Z/2)^2\rtimes \frak S_2$.

We consider two involutions $s$ and $t$ on $\CP^2$ defined by
\[
s\colon [z_1,z_2,z_3]\to [\bar{z}_1,\bar{z}_2,\bar{z}_3],\qquad t\colon  [z_1,z_2,z_3]\to [z_1,z_2,-z_3]
\]
where $[z_1,z_2,z_3]$ denotes the homogenous coordinate of $\CP^2$ and $\bar{z}$ denotes the complex conjugate of a complex number $z$. Observe that
\begin{enumerate}
\item $s$ leaves the submanifold $\CP^1=\{z_3=0\}$ of $\CP^2$ invariant, reverses an orientation on the $\CP^1$ and the fixed point set of $s$ is $\R P^2$,
\item the induced action of $t$ on $H_\ast(\CP^2)$ is trivial and the fixed point set of $t$ is the disjoint union of $\CP^1=\{z_3=0\}$ and a point $[0,0,1]$.
\end{enumerate}

\noindent
{\bf Type 1.} We consider the involution $s$ on both $\CP^2$ and $\overline{\CP^2}$.  Choose a point from the fixed set $\R P^2$ in $\CP^2$ and $\overline{\CP^2}$ respectively and take equivariant connected sum of $\CP^2$ and $\overline{\CP^2}$ around the chosen points.  Then the resulting involution on $\CP^2\#\overline{\CP^2}$ sends $(u,v)$ to $(-u,-v)$.

\noindent
{\bf Type 2.} We consider the involution $s$ on $\CP^2$ and $t$ on $\overline{\CP^2}$. Choose a point from the fixed set $\R P^2$ in $\CP^2$ and a point from the fixed set $\CP^1$ in  $\overline{\CP^2}$ and take equivariant connected sum of $\CP^2$ and $\overline{\CP^2}$ around the chosen points.  Then the resulting involution on $\CP^2\#\overline{\CP^2}$ sends $(u,v)$ to $(-u,v)$.

\noindent
{\bf Type 3.}  $\CP^2\#\overline{\CP^2}$ is obtained by removing an open disk $D$ from $\CP^2$ and $\overline{\CP^2}$ respectively and gluing together along the boundary $S^3$ via the identity map, so that it admits a reflection with respect to the $S^3$, which maps $\CP^2\backslash D$ to $\overline{\CP^2}\backslash D$.  This reflection sends $(u,v)$ to $(v,u)$.

Combining the diffeomorphisms of the three types above, one can realize any element of $\Aut(H^*(\cH_2))$ by a diffeomorphism of $\cH_2$.
\end{proof}

We shall prove that any element of $\Aut(H^*(\cH_n))$ is induced from a diffeomorphism of $\cH_n$ for any $n$ by induction on $n$, so that the proof of Theorem~\ref{theo:diffeo} will be completed.  For that we prepare three lemmas.  We regard $H^*(\cH_{j})$ for $j<n$ as a subring of $H^*(\cH_n)$ as usual and remember that  $\pm x_1$ and $\pm(2x_j-2x_1)$ for $j>1$ are all the square zero primitive elements in $H^2(\cH_n)$.

\begin{lemma} \label{lemm:ext}
Let $\psi$ be an element of $\Aut(H^*(\cH_{j}))$ for $j<n$.  If $\psi$ is induced from a diffeomorphism of $\cH_{j}$, then there is a diffeomorphism of $\cH_n$ whose induced automorphism of $H^*(\cH_n)$ preserves the subring $H^*(\cH_{j})$ and agrees with the given $\psi$ on $H^*(\cH_j)$.
\end{lemma}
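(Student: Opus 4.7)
The plan is to extend the given diffeomorphism one stage at a time along the tower \eqref{eqn:rhon}. Let $f\colon \cH_j\to\cH_j$ be a diffeomorphism inducing $\psi$, and set $f_j:=f$. I will inductively build, for each $k$ with $j\le k<n$, a diffeomorphism $f_{k+1}\colon \cH_{k+1}\to\cH_{k+1}$ covering $f_k$ with respect to $\pi_{k+1}$. The diffeomorphism $f_n$ of $\cH_n$ will then be the desired one: since every $f_{k+1}$ covers $f_k$, the induced automorphism $f_n^*$ of $H^*(\cH_n)$ will preserve the subring $H^*(\cH_j)\subset H^*(\cH_n)$ and restrict to $f^*=\psi$ on it.

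For the inductive step, recall from \eqref{eqn:matrixHn} that $\alpha_{k+1}=x_1\in H^2(\cH_k)$, so $\cH_{k+1}=P(\uC\oplus\gamma^{x_1})$ as a bundle over $\cH_k$. A diffeomorphism $f_{k+1}$ of $\cH_{k+1}$ covering $f_k$ exists precisely when there is an isomorphism of fiber bundles $f_k^*(\cH_{k+1})\cong \cH_{k+1}$ over $\cH_k$: such an isomorphism composed with the canonical bundle map $f_k^*(\cH_{k+1})\to\cH_{k+1}$ covering $f_k$ yields $f_{k+1}$. Since $f_k^*(\cH_{k+1})=P(\uC\oplus\gamma^{f_k^*(x_1)})$, the task reduces to showing $P(\uC\oplus\gamma^{f_k^*(x_1)})\cong P(\uC\oplus\gamma^{x_1})$ as bundles over $\cH_k$.

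Because $f_k^*$ is a graded ring automorphism of $H^*(\cH_k)$, the class $f_k^*(x_1)$ is again a square-zero primitive element of $H^2(\cH_k)$. By Lemma~\ref{lemm:mod2} its mod $2$ reduction equals that of $x_1$, so Lemma~\ref{lemma:bundle over H_n} provides the required bundle isomorphism. This completes the inductive step, hence the construction of $f_n$, and the cohomological conclusion follows from the observation in the first paragraph.

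The main obstacle is identifying the pulled-back $\CP^1$-bundle $f_k^*(\cH_{k+1})$ with $\cH_{k+1}$ itself, and this is dispatched precisely by combining Lemma~\ref{lemm:mod2} (which forces $f_k^*(x_1)\equiv x_1\pmod{2}$) with Lemma~\ref{lemma:bundle over H_n} (which converts this mod $2$ identity into a bundle isomorphism). Beyond these two inputs, the argument is a routine step-by-step extension up the Bott tower.
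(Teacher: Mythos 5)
Your proposal is correct and follows essentially the same route as the paper: extend the diffeomorphism stage by stage up the tower, using the fact that $f_k^*(x_1)$ is a square-zero primitive class (hence congruent to $x_1$ mod $2$ by Lemma~\ref{lemm:mod2}) so that Lemma~\ref{lemma:bundle over H_n} identifies the pulled-back bundle with $\cH_{k+1}$, yielding a bundle automorphism covering $f_k$. The only difference is that you spell out slightly more explicitly why the mod $2$ condition holds and how the bundle isomorphism produces the covering diffeomorphism, which the paper leaves implicit.
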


\begin{proof}
Let $f_j$ be a diffeomorphism of $\cH_j$ whose induced automorphism of $H^*(\cH_j)$ is $\psi$.  The pullback of the bundle
\begin{equation} \label{eqn:bundle}
\text{$\cH_{j+1}=P(\uC\oplus \gamma^{\alpha_{j+1}})\stackrel{\pi_{j+1}}\longrightarrow  \cH_j$}
\end{equation}
by $f_j$ is of the form $P(\uC\oplus\gamma^{f_j^*(\alpha_{j+1})})\to \cH_j$ but this is isomorphic to \eqref{eqn:bundle} by Lemma~\ref{lemma:bundle over H_n} since $\alpha_{j+1}^2=0=f_j^*(\alpha_{j+1})^2$ and the mod 2 reductions of $\alpha_{j+1}$ and $f_j^*(\alpha_{j+1})$ are same.  It follows that there is a bundle automorphism $f_{j+1}$ of \eqref{eqn:bundle} which covers $f_j$.  Since $f_{j+1}$ covers $f_j$, the automorphism $f_{j+1}^*$ of $H^*(\cH_{j+1})$ induced by $f_{j+1}$ preserves the subring $H^*(\cH_j)$ and agrees with $f_j^*$ on it.  Repeating this argument for $f_{j+1}$ in place of $f_j$, we get a diffeomorphism $f_{j+2}$ of $\cH_{j+2}$ which covers $f_{j+1}$ and so on.  Then the last diffeomorphism $f_n$ of $\cH_n$ is the desired one.
\end{proof}

\begin{lemma} \label{lemm:xn}
There is a diffeomorphism of $\cH_n$ whose induced automorphism of $H^*(\cH_n)$ is the identity on the subring $H^*(\cH_{n-1})$ and
maps $x_n$ to $-x_n+x_1$ (equivalently maps $2x_n-x_1$ to $-(2x_n-x_1)$).
\end{lemma}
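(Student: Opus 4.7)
The plan is to construct the required diffeomorphism as a fiberwise bundle automorphism of the top projection $\pi_n\colon \cH_n \to \cH_{n-1}$. Any such automorphism covers the identity on the base, so it automatically induces the identity on the subring $H^*(\cH_{n-1})\subset H^*(\cH_n)$, reducing the work to computing its action on $x_n$.

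Write $\cH_n = P(E)$ with $E = \uC \oplus \gamma^{x_1}$ over $\cH_{n-1}$, so $\det E = \gamma^{x_1}$. Because $E$ has rank two, there is a canonical isomorphism $E^* \otimes \det E \cong E$ that swaps the two line summands. Lemma~\ref{lemm:line} then provides two bundle isomorphisms over $\cH_{n-1}$: the metric map $b\colon P(E) \to P(E^*)$ and the tensoring map $c\colon P(E^*) \to P(E^* \otimes \det E) = \cH_n$. I would define $f := c\circ b\colon \cH_n \to \cH_n$, which is a smooth bundle automorphism of $\pi_n$; by construction $f^*$ fixes $H^*(\cH_{n-1})$.

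To determine $f^*(x_n)$, I would track the tautological line bundle through the composition. By Remark~\ref{rema:line}, $b^*\gamma_{P(E^*)} = \bar\gamma_{P(E)}$, so this step contributes $-x_n$ to the first Chern class. For the tensoring step, the line $\mathrm{span}(u\otimes v)$ in $(V\otimes L)_x$ is naturally $\mathrm{span}(u)\otimes L_x$, so $c^*(\gamma_{P(V\otimes L)}) = \gamma_{P(V)} \otimes \pi^* L$; applied with $V = E^*$ and $L = \det E = \gamma^{x_1}$, this contributes a further $+x_1$. Combining, $f^*\gamma_{\cH_n} = \bar\gamma_{\cH_n} \otimes \pi_n^*\gamma^{x_1}$, whence $f^*(x_n) = -x_n + x_1$ as required.

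The main subtlety is the Chern class bookkeeping for $c$: a naive reading of Remark~\ref{rema:line} as saying $c^*$ preserves the tautological class would give $f^*(x_n) = -x_n$, but this is impossible because it violates the relation $x_n^2 = x_1 x_n$ which $f^*$ must preserve. The correct $+x_1$ twist restores consistency, and one can cross-check that $-x_n + x_1$ is the unique nonidentity solution of $f^*(x_n)^2 = x_1 \cdot f^*(x_n)$ with $f^*(x_n) \equiv \pm x_n$ modulo $H^2(\cH_{n-1})$. A geometric sanity check: $f$ visibly swaps the two canonical sections of $\pi_n$ coming from the summands $\uC$ and $\gamma^{x_1}$, on which $x_n$ restricts to $0$ and $x_1$ respectively, forcing $f^*(x_n) = x_1 - x_n$.
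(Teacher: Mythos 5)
Your construction is correct, and the diffeomorphism you produce is in fact the very same one as in the paper: the composite $c\circ b$ of the two bundle maps from Lemma~\ref{lemm:line}, which covers the identity on $\cH_{n-1}$ and therefore fixes the subring $H^*(\cH_{n-1})$. Where you genuinely diverge is in the Chern-class bookkeeping for $c$. The paper does not track the tautological bundle through $c$ at all; it uses instead that $c$ preserves the complex structure on the fibers and compares the first Chern classes of the tangent bundles along the fibers via the Borel--Hirzebruch formula \eqref{eqn:tf}, obtaining $c^*(x_n)=x+x_1$. You prove directly that $c^*\bigl(\gamma_{P(V\otimes L)}\bigr)\cong\gamma_{P(V)}\otimes\pi^*L$, which is valid: the line $c(\ell)=\ell\otimes L_x$ is canonically the fiber of $\gamma_{P(V)}\otimes\pi^*L$ at $\ell$ (the fiberwise map $\tilde c$ of Lemma~\ref{lemm:line} exists only locally when $L$ is nontrivial, which is precisely why the tautological class is preserved only up to the twist by $\pi^*L$). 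Your caution about Remark~\ref{rema:line} is a fair catch: read literally for $c$ it would give $f^*(x_n)=-x_n$, which, as you observe, is incompatible with the relation $x_n^2=x_1x_n$; the paper in effect sidesteps this issue by routing through the fiberwise tangent bundle. Both computations yield $c^*(x_n)=x+x_1$ and hence $f^*(x_n)=-x_n+x_1$. Your route is the more elementary one, needing no characteristic-class formula for $T_fP(E)$, and the cross-check via the relation and the section-swapping picture are nice corroboration --- though the latter implicitly uses the orthogonal direct-sum metric and is best kept as a sanity check rather than as the argument itself.
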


\begin{proof}
Since the dual bundle of $\uC\oplus\gamma^{x_1}$ is isomorphic to $\uC\oplus\gamma^{-x_1}$, the proof of Lemma~\ref{lemm:line} shows that we have a bundle map
\[
\text{$b\colon \cH_n=P(\uC\oplus \gamma^{x_1})\to P(\uC\oplus\gamma^{-x_1})$}
\]
which covers the identity map on $\cH_{n-1}$. The pullback of the tautological line bundle $\eta_-$ over $P(\uC\oplus\gamma^{-x_1})$ by $b$ is complex conjugate to the tautological line bundle $\eta_+$ over $P(\uC\oplus\gamma^{x_1})$ (see Remark~\ref{rema:line}); so we obtain
\begin{equation} \label{eqn:c1}
b^*(x)=-x_n
\end{equation}
where $x=c_1(\eta_-)$ and $x_n=c_1(\eta_+)$ by the definition of $x_n$.

On the other hand, the proof of Lemma~\ref{lemm:line} shows that we have a bundle isomorphism
\[
c\colon P(\uC\oplus\gamma^{-x_1})\to P((\uC\oplus\gamma^{-x_1})\otimes\gamma^{x_1})=P(\gamma^{x_1}\oplus\uC)=\cH_n
\]
which preserves the complex structures on each fiber.  Therefore it induces a \emph{complex} vector bundle isomorphism $T_fP(\uC\oplus\gamma^{-x_1})\to T_fP(\gamma^{x_1}\oplus\uC)$ between their tangent bundles along the fibers.  According to the Borel-Hirzebruch formula \eqref{eqn:tf}, their first Chern classes are respectively $-2x-x_1$ and $-2x_n+x_1$, so
\begin{equation} \label{eqn:c2}
c^*(-2x_n+x_1)=-2x-x_1.
\end{equation}
Since the map $c$ covers the identity map on $\cH_{n-1}$, $c^*(x_1)=x_1$.  It follows from \eqref{eqn:c2} that $c^*(x_n)=x+x_1$.  This together with \eqref{eqn:c1} shows that
\begin{equation} \label{eqn:bc}
b^*(c^*(x_n))=-x_n+x_1
\end{equation}
because $b^*(x_1)=x_1$ which follows from the fact that $b$ covers the identity map on $\cH_{n-1}$.
The identity \eqref{eqn:bc} shows that the composition $c\circ b$ is the desired diffeomorphism.
\end{proof}

\begin{lemma} \label{lemm:xnxj}
There is a diffeomorphism of $\cH_n$ whose induced automorphism of $H^*(\cH_n)$ interchanges $x_i$ and $x_j$ for $i,j>1$ and fixes $x_k$ for $k\not=i,j$.
\end{lemma}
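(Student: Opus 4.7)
The plan is to realize $\cH_n$ as an iterated fiber product of copies of $\cH_2$ over $\cH_1=\CP^1$, and then obtain the desired diffeomorphism by permuting the factors. The key observation is that in the tower \eqref{eqn:rhon} every $\alpha_{j+1}$ is equal to $x_1\in H^2(\cH_j)$, and $x_1$ is pulled back from $\cH_1$ via the composition $\pi_2\circ\cdots\circ\pi_j\colon \cH_j\to\cH_1$. Consequently the line bundle $\gamma^{x_1}$ over $\cH_j$ is the pullback of $\gamma^{x_1}$ over $\cH_1$, so $\cH_{j+1}=P(\uC\oplus\gamma^{x_1})$ is canonically isomorphic (as a bundle over $\cH_j$) to the pullback $\cH_j\times_{\cH_1}\cH_2$. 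Iterating this identification, I would obtain
\[
\cH_n\;\cong\;\underbrace{\cH_2\times_{\cH_1}\cdots\times_{\cH_1}\cH_2}_{n-1\text{ copies}}
\]
as a bundle over $\cH_1$.

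Next I would note that the symmetric group $\frak S_{n-1}$ acts on this fiber product by permuting the factors, and every such permutation is a diffeomorphism of $\cH_n$ covering the identity on $\cH_1$. To identify the induced action on cohomology, I would track the tautological classes: since each $\cH_k\to\cH_{k-1}$ was constructed as a pullback of $\cH_2\to\cH_1$ and the bundle maps $\rho_k$ pull back tautological line bundles to tautological line bundles (see \eqref{eqn:rhoj}), for $k>1$ the class $x_k\in H^2(\cH_n)$ coincides with the pullback of the tautological class $x_2\in H^2(\cH_2)$ under the projection from $\cH_n$ onto the $(k-1)$-th factor. On the other hand $x_1$ is pulled back from the common base $\cH_1$, and so is fixed by any permutation of the factors.

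The transposition swapping the $(i-1)$-th and $(j-1)$-th factors (for $i,j>1$) will therefore be the desired diffeomorphism: its induced action on cohomology interchanges $x_i$ with $x_j$, fixes $x_1$, and fixes every $x_k$ for $k\neq i,j$. I do not anticipate any genuine obstacle; the only care required is bookkeeping to match each factor of the fiber product with the correct generator $x_k$, which falls out of the inductive construction in diagram \eqref{eqn:suyoung_tower}.
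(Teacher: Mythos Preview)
Your argument is correct. The identification $\cH_{j+1}\cong \cH_j\times_{\cH_1}\cH_2$ is valid precisely because $\alpha_{j+1}=x_1$ is pulled back from $\cH_1$, and iterating gives the $(n-1)$-fold fiber product over $\cH_1$; the $\frak S_{n-1}$-symmetry then does exactly what you claim on the generators $x_2,\dots,x_n$ while fixing $x_1$.

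The paper's proof reaches the same conclusion by a slightly more local route: it reduces to adjacent transpositions, realizes each $\cH_{i+1}$ as the fiber product $\cH_i\times_{\cH_{i-1}}\cH_i$ (rather than over $\cH_1$), swaps the two copies to get a diffeomorphism $\tau_{i+1}$, and then extends $\tau_{i+1}$ up the tower to $\cH_n$. Your global fiber-product description is cleaner in that it produces the full $\frak S_{n-1}$-action at once and avoids the extension step; it is essentially the viewpoint the paper records in the Remark following the proof, where $\cH_n$ is written as $(S^3)^n/(S^1)^n$ via the matrix \eqref{eqn:matrixHn} and the diffeomorphism is obtained by permuting the $S^3$ factors.
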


\begin{proof}
It suffices to show that there is a diffeomorphism $g_i$ of $\cH_n$ for each $i>1$ whose induced automorphism of $H^*(\cH_n)$ interchanges $x_i$ and $x_{i+1}$ and fixes $x_k$ for $k\not=i,i+1$, because the desired diffeomorphism can be obtained by composing those diffeomorphisms.

Remember that $\cH_{i+1}$ is obtained as the fiber product
\[
\begin{CD}
\cH_{i+1} @>\rho_{i+1}>> \cH_i\\
@VV\pi_{i+1}V   @VV\pi_iV\\
\cH_i @>\pi_i>> \cH_{i-1}.
\end{CD}
\]
Permuting the coordinates of $\cH_i\times \cH_i$ preserves the subset $\cH_{i+1}$ and defines a diffeomorphism $\tau_{i+1}$ of $\cH_{i+1}$.  One notes that  $\tau_{i+1}^*(x_i)=\rho_{i+1}^*(x_i)=x_{i+1}$ and $\tau_{i+1}^*(x_k)=x_k$ for $k<i$.  Since $\pi_{i+1}\circ \tau_{i+1}=\pi_{i+1}$, the diffeomorphism $\tau_{i+1}$ naturally extends to a diffeomorphism $\tau_{i+2}$ of $\cH_{i+2}$ and finally extends to a diffeomorphism $g_i$ of $\cH_n$ because of \eqref{eqn:suyoung_tower}.  Since $\tau_{i+1}^*(x_1)=x_1$, the pullback of the line bundle $\gamma^{x_1}$ over $\cH_{i+1}$ is isomorphic to $\gamma^{x_1}$ itself.  This implies that $\tau_{i+2}^*(x_{i+2})=x_{i+2}$ because $x_{i+2}$ is the first Chern class of the tautological line bundle over $P(\uC\oplus \gamma^{x_1})$.  Therefore $g_i^*$ fixes $x_{i+2}$ since $g_i$ is an extension of $\tau_{i+2}$.  Similarly, $g_i^*$ fixes $x_k$ for $k>i+1$.  Thus $g_i$ is the desired diffeomorphism.
\end{proof}

\begin{remark}
As remarked at \eqref{eqn:quotient}, one can regard $\cH_n$ as the quotient of $(S^3)^n$ by a free action of $(S^1)^n$ associated with the matrix \eqref{eqn:matrixHn}.  Then interchanging the $i$-th factor and the $j$-th factor of $(S^3)^n$ produces a desired diffeomorphism in Lemma~\ref{lemm:xnxj}.
\end{remark}

Now we shall prove that any element of $\Aut(H^*(\cH_n))$ is induced from a diffeomorphism of $\cH_n$ for any $n$ by induction on $n$.  This claim is established for $n=2$ by Lemma~\ref{lemm:H2}.  Suppose the claim holds for $n-1$.  Let $\varphi$ be an element of $\Aut(H^*(\cH_n))$.  Then $\varphi$ permutes square zero primitive elements $\pm x_1, \pm(2x_j-x_1)$ $(j>1)$ up to sign.  We distinguish three cases.

{\bf Case 1.}  The case where $\varphi(2x_n-x_1)=\pm(2x_n-x_1)$.  In this case $\varphi$ preserves the subring $H^*(\cH_{n-1})$ and let $\psi$ be the restriction of $\varphi$ to $H^*(\cH_{n-1})$.  By Lemma~\ref{lemm:ext} there is a diffeomorphism $f$ of $\cH_n$ whose induced automorphism $f^*$ of $H^*(\cH_n)$ agrees with $\psi$ on $H^*(\cH_{n-1})$.  Then the composition $(f^{-1})^*\circ \varphi$ is the identity on $H^*(\cH_{n-1})$, so we may assume that $\varphi$ is the identity on $H^*(\cH_{n-1})$.  If $\varphi(2x_n-x_1)=2x_n-x_1$, then $\varphi$ is the identity so that it is induced from the identity diffeomorphism of $\cH_n$.  If $\varphi(2x_n-x_1)=-(2x_n-x_1)$, then  $\varphi$ is induced from a diffeomorphism of $\cH_n$ by Lemma~\ref{lemm:xn}.

{\bf Case 2.}  The case where $\varphi(2x_n-x_1)=\pm(2x_j-x_1)$ for some $1<j<n$.   By Lemma~\ref{lemm:xnxj} there is a diffeomorphism $g$ of $\cH_n$ whose induced automorphism $g^*$ of $H^*(\cH_n)$ interchanges $x_j$ and $x_n$ and fixes $x_k$ for $k\not=j,n$.  Therefore the composition $g^*\circ \varphi$ is an automorphism treated in Case 1, so that $g^*\circ \varphi$ is induced from a diffeomorphism of $\cH_n$ by Case 1 and hence so is $\varphi$.

{\bf Case 3.}  The case where $\varphi(2x_n-x_1)=\pm x_1$.  By Lemma~\ref{lemm:H2} and Lemma~\ref{lemm:ext}, there is a diffeomorphism $h$ of $\cH_n$ whose induced automorphism $h^*$ of $H^*(\cH_n)$ maps $x_1$ to $2x_2-x_1$.  Therefore the composition $h^*\circ \varphi$ is an automorphism treated in Case 2, so that it is induced from a diffeomorphism of $\cH_n$ and hence so is $\varphi$.

This completes the proof of the desired claim and hence Theorem~\ref{theo:diffeo}.

\medskip
{\bf Concluding remark.}
The cohomological rigidity problem asks whether two toric manifolds are diffeomorphic (or homeomorphic) if their cohomology rings are isomorphic. More strongly, it is asked in \cite{Ma-Su-2008} whether any cohomology ring isomorphism between two toric manifolds is induced from a diffeomorphism.  We may call this problem the \emph{strong cohomological rigidity problem} for toric manifolds.  Corollary~\ref{coro:diffeo} gives a supporting evidence to the problem and the authors do not know any counterexample to the problem.

\bigskip
\bibliographystyle{amsplain}

\end{document}